\crefname{hypothesis}{Hypothesis}{Hypotheses}
\title{A parameterized Douglas-Rachford  splitting  algorithm for nonconvex  optimization
}
\author{Fengmiao Bian\thanks{School of Mathematical Sciences, Shanghai Jiao Tong University, Shanghai 200240, CHINA
  (\email{bianfm17@sjtu.edu.cn}).}
\and Xiaoqun Zhang\thanks{School of Mathematical Sciences and Institute of Natural Sciences, Shanghai Jiao Tong University, Shanghai 200240, CHINA
  (\email{xqzhang@sjtu.edu.cn}).}}
\begin{document}

\maketitle

\begin{abstract}
 In this paper, we  study a parameterized Douglas-Rachford splitting method for  a class of nonconvex optimization problem. A new merit function is constructed to establish the convergence of the whole sequence generated by the parameterized Douglas-Rachford splitting method.  We then apply the parameterized Douglas-Rachford splitting method to three important classes of nonconvex optimization problems arising in data science: sparsity constrained least squares problem, feasibility problem and low rank matrix completion. Numerical results validate the effectiveness of the parameterized  Douglas-Rachford splitting method compared with some other classical methods.
 \end{abstract}

\begin{keywords}
 Parameterized  Douglas-Rachford splitting method; nonconvex optimization problems; global convergence; sparsity constrained least squares problem;  low rank matrix completion;  feasibility problem
\end{keywords}

\begin{AMS}
90C26, 90C30, 90C90,15A83,65K05
\end{AMS}

\section{Introduction}
The Douglas-Rachford (DR) splitting is a common splitting method, which was originally introduced in \cite{DR} for finding numerical solutions of heat differential equations. Later, Lions and Mercier \cite{LMe} used this method  to minimize the sum of two closed convex functions
\begin{equation}\label{model}
\min_{u} f(u) + g(u),
\end{equation}
 by solving the optimality condition $0 \in \partial f(u) + \partial g(u).$ In the case when $f$ and $g$ are both convex, the DR splitting method can be written by the following interation:
\begin{equation}\label{drsch}
x^{t+1} = \frac{1}{2}x^{t} + \frac{1}{2}(2~\textmd{prox}_{\gamma g} - I) \circ (2~\textmd{prox}_{\gamma f} - I) (x^t),
\end{equation}
where $I$ is the identity mapping, $\gamma > 0$ and
\begin{equation}\label{def-pro}
\textmd{prox}_{\gamma f} (x) := \arg\min_{u} f(u) + \frac{1}{2\gamma} \| u - x \|^2.
\end{equation}
From \eqref{drsch} we can see that DR splitting method converts solving problem \eqref{model} into solving two proximal operators that are easy to compute. Therefore, the Douglas–Rachford (DR) splitting method is a very powerful algorithm to solving problems with competing structures, such as finding a point in the intersection of two closed convex sets (feasibility problem), which can be viewed as a minimization problem that minimizes the sum of the indicator functions of the two sets. Eckstein and Bertsekas \cite{EB} studied further the DR splitting algorithm in the convex setting and  revealed  its relationship with the proximal point algorithm.  The DR splitting method has also been applied to various convex optimization problems arising from signal processing and other applications; see, for example, \cite{CP, GRY, HY, PSB}. 

Recently, in \cite{AC}, Aragon Artacho and Campoy studied a slight modification of the Douglas–Rachford method to the best approximation problem of finding the closest point in $ A \cap B$ to $q$, i.e.,
\begin{equation}\label{clsp}
\min_{x \in A \cap B} \| x - q \|,
\end{equation}
where $A$ and $B$ are two nonempty closed convex sets of a Hilbert space $\mathcal{H}$ and $q \in \mathcal{H}$ is any given point. More specifically, for any pair of parameters $\eta, \beta \in (0, 1)$, they introduced an averaged alternating modified reflections operator (AAMR operator) $T_{A, B, \eta, \beta}: \mathcal{H} \to \mathcal{H}$
\begin{equation}\label{T-oper}
T_{A, B, \eta, \beta} := (1 - \eta) I + \eta (2 \beta P_B - I)(2 \beta P_A - I),
\end{equation}
and defined a new projection method termed averaged alternating modified reflections (AAMR) method, which generated the sequence iteratively by
\begin{equation}\label{AAMR}
x^{t+1} := T_{A, B, \eta, \beta} (x^t), ~~~~t = 0, 1, 2, \dots 
\end{equation}
where $I$ denotes the identity mapping and $P_A$, $P_B$ denote the projectors onto A and B, respectively. In \cite{WW}, Wang and Wang proposed another similar modification to the DR method called parameterized Douglas–Rachford (PDR) algorithm which is iteratively defined in the convex setting by 
\begin{equation}\label{b-2}
x^{t+1} = \Big[ \left(1 - \frac{1}{\alpha} \right)I + \frac{1}{\alpha} \left( \alpha~\textmd{prox}_{\gamma g} - I \right) \left( \alpha~\textmd{prox}_{\gamma f} - I \right) \Big] (x^t),
\end{equation}
where $\alpha \in (1, 2].$ Obviously, in the case when $f$ and $g$ are the indicator functions of the two convex sets, $\eta = \frac{1}{\alpha}$ and $\alpha = 2 \beta$, the PDR method is exactly the AAMR method. We also notice that when $\eta = \frac{1}{2}$, $\beta = 1$ in the AAMA method and $\alpha = \frac{1}{2}$ in the PDR method, these two methods are the classical DR method. When $\eta = 1, \beta = 1$, the  AAMR method is the Peaceman–Rachford (PR) splitting method (see \cite{PR}) to solving feasibility problem in the convex setting. As shown in papers \cite{WW,AC}, surprisingly, though, the slight modification
$(\alpha\textmd{prox}_{\gamma f} - I)$ and $(\alpha\textmd{prox}_{\gamma g} - I)$ in the reflector operators $(2\textmd{prox}_{\gamma f} - I)$ and $(2\textmd{prox}_{\gamma g} - I)$ completely changes the dynamics of the sequence generated by the scheme.


%

On the other hand, the behavior of the DR splitting algorithm in the nonconvex cases has attracted a lot of attention. A very important reason is that DR method has been successfully applied to many important practical nonconvex problems \cite{ABT1,HL,HLN,P,LP}. For example, Hesse and Luke showed in \cite{HL} that the DR  splitting method exhibits local linear convergence for an affine set and a super-regular set,   see also \cite{P} for  the similar results  of DR splitting method for two super-regular sets.  Li and Pong \cite{LP} showed that DR splitting method can be applied to the nonconvex problem \eqref{model}  under some assumptions for $f$ and $g$,  and then used this nonconvex DR splitting method to find a point in the intersection of a closed convex set $C$ and a general closed set $D$. Very recently, Themelis and Patrinos \cite{TP} gave a unified method to prove the convergence for ADMM and DR splitting applied to nonconvex problems under less restrictive assumptions than \cite{LP}.  

Naturally, in this paper we want to understand the convegence of the PDR splitting method as stated above in the nonconvex case and explore the performance of  PDR splitting method for many important nonconvex optimization problems arising in machine learning. Firstly, we show that, if the step-size parameter is smaller than a computable threshold and the sequence generated by the PDR splitting method has a cluster point, then it gives a stationary point of problem \eqref{model} where $f$ and $g$ are both possible nonconvex. We also present some sufficient conditions to guarantee the boundedness of the sequence generated by the PDR splitting, and so the existence of cluster point. Furthermore, we show the convergence of the whole sequence generated by the PDR splitting under the additional assumption that $f$ and $g$ are semi-algebraic. Finally, the PDR splitting method is appllied to solving the sparsity least square problem, feasibility problem and low rank matrix completion problem. Our preliminary numerical results show that the
PDR splitting method outperforms the DR and PR methods.


The rest of this paper is structured as follows. In \cref{sec:notation}, we give some notations  and preliminary materials.  In \cref{sec:convergence}, we introduce the  parameterized Douglas-Rachford splitting algorithm and establish the convergence of this algorithm for nonconvex optimization problems where the objective function is the sum of a smooth function $f$ with Lipschitz continuous  gradient and a closed function $g$. In \cref{sec:examples}, as stated before, we demonstrate how the PDR splitting method can be applied well for solving several important classes of nonconvex optimization problems arising in machine learning.  In \cref{sec:conclude}, we give some concluding remarks.\\

\section{Notation and preliminaries}
\label{sec:notation}

In this section, we introduce our notations and state some basic concepts. We refer the reader to the textbooks \cite{RW, BC} for these basic knowledge.\\

 Let $\mathbb{R}^{n}$ denote the $n$-dimensional Euclidean space, $ \left \langle \cdot, \cdot \right \rangle$ denote the inner product and the induced norm by $\| \cdot \| = \sqrt{\left \langle \cdot, \cdot \right \rangle}$. For an extended-real-valued function $f : \mathbb{R}^{n} \to (-\infty, \infty]$, we say that $f$ is proper if it is never $-\infty$ and its domain, dom $f := \{x \in \mathbb{R}^{n} : f(x) < +\infty\}$, is nonempty. The function is called closed if it is proper and lower semicontinuous. 
 
\begin{definition}\label{DEF-201} $($limiting  subdifferential$)$. 
Let $f$  be a  proper function.  The limiting {\it subdifferential} of $f$ at $x \in $ dom $f$ is defined by

\begin{equation}\label{subdiff}
\aligned
\partial f(x) :=  \Bigg\{  &  v \in \mathbb{R}^{n} : \exists x^{t} \to x, f(x^{t}) \to f(x), v^{t} \to v ~~ \textmd{with} \\
& \liminf_{z \to x^t} \frac{f(z) - f(x^t) -  \left \langle v^t, z - x^t \right \rangle}{\| z - x^t \|} \geq 0 ~ \textmd{for} ~ \textmd{each} ~ t       \Bigg\}. 
\endaligned
\end{equation}  
\end{definition} 

If $f$ is differentiable at $x$, we have $\partial f(x) = \{ \nabla f(x) \}$. If $f$ is convex, we have 
\begin{equation}\label{condiff}
\partial f(x) = \Bigg\{ v \in \mathbb{R}^{n} : f(z) \geq f(x) + \left \langle v, z-x \right \rangle~\textmd{for}~\textmd{any}~ z \in \mathbb{R}^{n} \Bigg\},
\end{equation}
which is the classic definition of subdifferential in convex analysis. Moreover, we also have the following robustness property:
\begin{equation}\label{subdiffproperty}
\Bigg\{  v \in \mathbb{R}^{n} :  \exists x^{t} \to x, f(x^{t}) \to f(x), v^{t} \to v, v^{t} \in \partial f(x^{t})\Bigg\}\subseteq \partial f(x).
\end{equation}
A point $x^{*}$ is a stationary point of a function $f$ if $0 \in \partial f(x^{*})$. $x^{*}$ is a critical point of $f$ if $f$ is differentiable at $x^{*}$ and $\nabla f(x^{*}) = 0$. A function is called to be coercive if $\liminf_{\| x \| \to \infty} f(x) = \infty$. We say that a function $f$ is a strongly convex function with modulus $\sigma > 0$ if $f - \frac{\sigma}{2} \| \cdot \|^{2}$ is a convex function.\\

For any $\gamma >0$, the proximal mapping of $f$ is defined by 
$$
P_{\gamma f} (x): x \rightarrow {\arg\min}_{y\in\mathbb{R}^n} \left\{ f(y) + \frac{1}{2\gamma} \| y - x\|^2 \right\}, 
$$
assuming that the $\arg \min$ exists,  where $\rightarrow$  means a possibly set-valued mapping.  

\begin{definition}\label{Indi-01} $($indicator function$)$. 
For a closed set $S \subseteq \mathbb{R}^{n}$, its indicator function $\mathcal{I}_{S}$ is defined by
$$
\mathcal{I}_S(x)= 
\begin{cases}
 0, ~~~~ & \textmd{if} ~ x\in S,\\
+\infty, ~~~~ & \textmd{if} ~ x\notin S. 
\end{cases}
$$ 
\end{definition} 

\begin{definition}\label{Sem-201} $($real semialgebraic set$)$. 
A semi-algebraic set $S \subseteq \mathbb{R}^{n}$ is a finite union of sets of the form
\begin{equation}\label{semialg}
\Bigg\{ x \in \mathbb{R}^{n} : h_{1} (x) = \cdots h_{k} (x) = 0, ~g_{1} (x) < 0, \dots , g_{l} (x) < 0 \Bigg\},
\end{equation}
where $g_{1}, \dots, g_{l}$ and $h_{1}, \dots, h_{k}$ are real polynomials. 
\end{definition} 
\begin{definition}\label{Sem-202} $($real semialgebraic function$)$. 
A function $F : \mathbb{R}^{n} \to \mathbb{R}$ is semi-algebraic if the set $\big\{ (x,~F(x)) \in \mathbb{R}^{n+1} : x \in \mathbb{R}^{n} \big\}$ is semi-algebraic.
\end{definition} 

Remark that the semi-algebraic sets and semi-algebraic functions can be easily identified and contain a large number of possibly nonconvex functions arising in applications, such as see \cite{ABRS,ABS,BDL}. We will also use the following Kurdyka-\L ojasiewicz (KL) property which holds in particular for semi-algebraic functions.\\

\begin{definition}\label{KL}$($KL property and KL function$)$.
The function $F: \mathbb{R}^{n} \to \mathbb{R} \cup \{\infty\}$ has the Kurdyka-\L ojasiewicz property at $x^{*} \in$ dom $\partial F$ if there exist $\eta \in (0, \infty]$, a
neighborhood $U$ of $x^{*}$, and a continuous concave function $\varphi : [0, \eta) \to \mathbb{R}_{+}$ such that:

\begin{itemize}
\item [(i)]  $\varphi(0) = 0,~\varphi \in C^{1}((0, \eta))$, and  $\varphi^{'}(s) > 0$ for all $s \in (0, \eta)$;
\item[(ii)] for all $x \in U \cap [F(x^{*}) < F < F(x^{*})+ \eta]$ the Kurdyka-\L ojasiewicz inequality holds, i.e.,
$$ 
\varphi^{'}(F(x) - F(x^{*}))dist(0, \partial F(x)) \geq 1.
$$
\end{itemize}
If the function $F$ satisfies the Kurdyka-\L ojasiewicz property  at each point of dom $\partial F$, it is called a KL function.
\end{definition}
\begin{remark}
It follows from \cite{ABRS} that a proper closed semi-algebraic function always satisfies the KL property.
\end{remark}

\section{ Parameterized Douglas-Rachford splitting algorithm and its convergence}
\label{sec:convergence}
In this section, we consider the following optimization problem:
\begin{equation}\label{3-model}
\min_{u}    f(u) + g(u),
\end{equation}
where $f$ and $g$ are proper closed possibly nonconvex functions.  When parameterized  Douglas-Rachford splitting  is applied to problem \eqref{3-model}, we can give the algorithm in \cref{alg:GDR}. 

\begin{algorithm}
\caption{Parameterized Douglas-Rachford Splitting Algorithm}
\label{alg:GDR}
\begin{algorithmic}
\STATE{{\bf{Step 0.}}  Choose a step-size $\gamma >0$,~$\alpha\in(\frac{3}{2},2]$ and an initial point $x^{0}.$}

\STATE{{\bf{Step 1.}}  Set
\begin{equation}\label{alg}
\begin{cases}
u^{t+1} \in  \arg\min_{u}  \Bigg\{  f(u) + \frac{1}{2\gamma}\|u-x^{t}\|^{2} \Bigg\},
\\
v^{t+1} \in  \arg\min_{v} \Bigg\{ g(v) + \frac{1}{2\gamma}\|v - \alpha u^{t+1} + x^{t} \|^{2} \Bigg\},\\
\\
x^{t+1} = x^{t} + (v^{t+1} - u^{t+1}).
\end{cases}
\end{equation}
}
\STATE{{\bf{Step 2.}}  If a termination criterion is not met, go to Step 1.}

\end{algorithmic}
\end{algorithm}

By using the optimal conditions and the subdifferential calculus rule \cite{RW} for $u$ and $v$-updates  in \eqref{alg} we have
\begin{equation}\label{opt01}
0= \nabla f(u^{t+1}) + \frac{1}{\gamma} (u^{t+1} - x^t), 
\end{equation}
\begin{equation}\label{opt02}
0 \in \partial g (v^{t+1}) + \frac{1}{\gamma} (v^{t+1} + x^t - \alpha u^{t+1}). 
\end{equation} 

We notice that the Lipschitz differentiability of $f$ is very important to the recent convergence analysis of the DR splitting method in the nonconvex settings, for instance in \cite{LP,TP}. In addition, the  proximal mapping of $g$  is well-defined and easy to compute. Hence it's reasonable to make the following assumptions about $f$ and $g$.

\begin{assumption}\label{ass1} Functions $f, g$ satisfy
\begin{itemize}
\item [(a1)] The function $f$ has a Lipschitz continuous gradient, i.e, there exists a constant $L  > 0$ such that
\begin{equation}\label{flip}
\| \nabla f(y_1) - \nabla f(y_2) \| \leq L \| y_1 - y_2 \|, ~~~\forall  y_1, y_2 \in \mathbb{R}^{n};
\end{equation}
\item[(a2)] $g$ is a proper closed function with a nonempty mapping $P_{\gamma g}(x)$ for any $x$ and for $\gamma > 0$.
\end{itemize}
\end{assumption}

\begin{remark}We point out that
\begin{itemize}
\item[(i)] In \cite{LP,TP}, under the condition that $f$ has a Lipschitz continuous gradient, the authors established the convergence of DR splitting. In \cite{LLP}, under the conditions that $f$ has a Lipschitz continuous gradient and is strongly convex, the authors proved the convergence of PR splitting;
\item[(ii)]Under the assumption $(a1)$, we can always find $l \in \mathbb{R}$ such that $f + \frac{l}{2}\| \cdot \|^{2}$ is convex, in particular, we can take $l = L$.
\end{itemize}
\end{remark}

By the optimal condition \eqref{opt01} and the Lipschitz continuity of $\nabla f$, we can easily get the following lemma. 

\begin{lemma}\label{opt-con39} 
Suppose function $f$ has a Lipschitz continuous gradient whose Lipschitz continuity constant is bounded by $L > 0$. Then the sequence  $\{(u^t, v^t, x^t)\}$ generated by \eqref{alg} satisfies 
\begin{equation}\label{ineq4}
\| x^{t} - x^{t-1} \| \leq ( 1 + \gamma L ) \| u^{t+1} - u^{t} \|.
\end{equation}
\end{lemma}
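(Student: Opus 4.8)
The plan is to exploit the optimality condition \eqref{opt01}, which is an \emph{equality} because $f$ is smooth, and apply it at two consecutive indices. First I would rewrite \eqref{opt01} to isolate the iterate $x^t$, obtaining
\begin{equation*}
x^t = u^{t+1} + \gamma \nabla f(u^{t+1}).
\end{equation*}
Since this relation is derived generically from the $u$-update in \eqref{alg}, it holds for every iteration index; in particular, replacing $t$ by $t-1$ gives $x^{t-1} = u^t + \gamma \nabla f(u^t)$.

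Next I would subtract the two identities to express the consecutive difference of the $x$-iterates purely in terms of the $u$-iterates:
\begin{equation*}
x^t - x^{t-1} = (u^{t+1} - u^t) + \gamma \bigl( \nabla f(u^{t+1}) - \nabla f(u^t) \bigr).
\end{equation*}
Taking norms and applying the triangle inequality yields
\begin{equation*}
\| x^t - x^{t-1} \| \leq \| u^{t+1} - u^t \| + \gamma \, \| \nabla f(u^{t+1}) - \nabla f(u^t) \|.
\end{equation*}

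Finally I would invoke the Lipschitz continuity \eqref{flip} of $\nabla f$ with $y_1 = u^{t+1}$ and $y_2 = u^t$, which bounds the gradient difference by $L \| u^{t+1} - u^t \|$; collecting terms then gives the factor $(1 + \gamma L)$ and completes the estimate \eqref{ineq4}. There is no real obstacle in this argument: the only point requiring a moment's care is the index bookkeeping, namely that the pairing $x^t \leftrightarrow u^{t+1}$ in the optimality condition is what makes the bound on $\| x^t - x^{t-1} \|$ come out in terms of $\| u^{t+1} - u^t \|$ rather than $\| u^t - u^{t-1} \|$. The crucial structural feature being used is that the $u$-update involves the smooth function $f$, so its optimality condition is an equation rather than a subdifferential inclusion, allowing an exact substitution for $x^t$.
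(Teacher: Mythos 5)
Your argument is correct and is exactly the route the paper intends: the paper states the lemma follows from the optimality condition \eqref{opt01} and the Lipschitz continuity of $\nabla f$, which is precisely your substitution $x^t = u^{t+1} + \gamma \nabla f(u^{t+1})$ applied at consecutive indices, followed by the triangle inequality and \eqref{flip}. No gaps.
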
 

Inspired by \cite{LP}, To analyze the convergent behavior of PDR splitting method, we construct the following merit function:
\begin{equation}\label{meritfun}
\mathcal{M}_{\gamma} (u, v, x) := f(u) + g(v) - \frac{1}{2\gamma}\| u - v \|^{2} + \frac{1}{\gamma} \left\langle x - (\alpha - 1)u, v - u \right\rangle + \frac{2 - \alpha}{2\gamma}\| u \|^{2}.
\end{equation} 
Moreover, it is not hard to see that the merit function $\mathcal{M}_{\gamma}$ can be alternatively written as
\begin{subequations}
\begin{align}
  \mathcal{M}_{\gamma} (u, v, x) 
  &=   f(u) + g(v) + \frac{1}{2\gamma} \| \alpha u - v - x \|^{2} - \frac{1}{2\gamma} \| x - (\alpha - 1)u \|^{2}  \label{meritfun1}  \\
  &~~~~ - \frac{1}{\gamma} \| u - v \|^{2} + \frac{2 - \alpha}{2\gamma} \| u \|^{2}
  \nonumber \\
  &= f(u) + g(v) + \frac{1}{2\gamma} \Big( \|x - u\|^2 - \|x - v\|^2 \Big)  + \frac{1}{\gamma}  \left\langle (2 - \alpha) u,   v - u \right\rangle  \label{meritfun58} \\
  &~~~~ + \frac{2 - \alpha}{2\gamma} \| u \|^{2} \nonumber ,
\end{align}
\end{subequations}
where the Equation  \eqref{meritfun1}  follows from the elementary relation $\left\langle a,  b\right\rangle = \frac{1}{2} ( \| a + b \|^{2} - \| a \|^{2} - \| b \|^{2})$ applied with $a = x - (\alpha - 1)u$ and $b = v - u$ in \eqref{meritfun}, and the Equation \eqref{meritfun58} follows from the relation $\left\langle a, b \right\rangle = \frac{1}{2}(\| a \|^{2} + \| b \|^{2} - \|a - b\|^{2})$ in \eqref{meritfun} with $a = x - u$ and $b = v - u$. These equivalent relations will be made use of in our convergence analysis.

Next, we state and prove a convergence theorem for the parameterized Douglas-Rachford splitting algorithm \eqref{alg}. 
\begin{theorem}\label{the1} $($Global subsequential convergence$)$
Assume that the functions $f$ and $g$ satisfy \cref{ass1}. Let $\frac{3}{2} < \alpha \leq 2$ and the parameter $\gamma > 0$ is chosen such that
\begin{equation}\label{gammcond}
\frac{4 - \alpha}{2}( 1 + \gamma L)^{2} + \frac{9 - 2\alpha}{2}\gamma l - \frac{1 + \alpha}{2} < 0.
\end{equation}
Then the sequence $\{ \mathcal{M}_{\gamma} ( u^{t}, v^{t}, x^{t})\}$ is nonincreasing. Moreover, if a cluster point of the sequence $\{ ( u^{t}, v^{t}, x^{t}) \}$ exists, then
\begin{equation}\label{lim}
lim_{t \to \infty} \| x^{t+1} - x^{t} \| = lim_{t \to \infty} \| v^{t+1} - u^{t+1} \| = 0.
\end{equation}
Furthermore, for any cluster point $(u^{*}, v^{*}, x^{*})$, we have $u^{*} = v^{*}$, and 
\begin{equation}\label{lastopti}
0 \in \nabla f(v^{*}) + \partial g(v^{*}) + \frac{1}{\gamma}(2 - \alpha) v^{*}.
\end{equation}
\end{theorem}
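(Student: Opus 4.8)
The plan is to prove the three assertions in sequence, with the engine being a \emph{sufficient decrease} estimate of the form
\begin{equation*}
\mathcal{M}_{\gamma}(u^{t+1}, v^{t+1}, x^{t+1}) \leq \mathcal{M}_{\gamma}(u^{t}, v^{t}, x^{t}) - c\, \| u^{t+1} - u^{t} \|^{2},
\end{equation*}
where $c > 0$ is exactly the quantity whose negativity is encoded by \eqref{gammcond}. Establishing this descent inequality is both the first step and the main obstacle; everything else follows by fairly standard arguments.

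To derive the descent, I would compute $\mathcal{M}_{\gamma}(u^{t+1}, v^{t+1}, x^{t+1}) - \mathcal{M}_{\gamma}(u^{t}, v^{t}, x^{t})$ and bound it using three inputs. First, the $v$-update in \eqref{alg} is a proximal step, so testing its optimality against $v = v^{t}$ controls $g(v^{t+1}) - g(v^{t})$ by quadratic terms in the $x,u,v$ increments; equivalently one uses \eqref{opt02} together with the representation \eqref{meritfun1}. Second, for the smooth part I would combine the descent lemma for $f$ (from the $L$-Lipschitz gradient) with the $l$-weak convexity inequality $f(u^{t}) \geq f(u^{t+1}) + \langle \nabla f(u^{t+1}), u^{t} - u^{t+1} \rangle - \frac{l}{2}\|u^{t}-u^{t+1}\|^{2}$, and substitute $\nabla f(u^{t+1}) = \frac{1}{\gamma}(x^{t} - u^{t+1})$ from \eqref{opt01}. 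Third, I would use $x^{t+1} - x^{t} = v^{t+1} - u^{t+1}$ and the representation \eqref{meritfun58} to collapse the inner-product and quadratic terms. After grouping, the right-hand side should reduce to a linear combination of $\|u^{t+1}-u^{t}\|^{2}$ and $\|x^{t} - x^{t-1}\|^{2}$ (with a $\gamma l$-weighted contribution from the weak convexity step); applying \cref{opt-con39} to replace $\|x^{t}-x^{t-1}\|^{2}$ by $(1+\gamma L)^{2}\|u^{t+1}-u^{t}\|^{2}$ produces precisely the coefficient $\frac{4-\alpha}{2}(1+\gamma L)^{2} + \frac{9-2\alpha}{2}\gamma l - \frac{1+\alpha}{2}$ of \eqref{gammcond}, which is negative by hypothesis. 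As a sanity check, letting $\gamma \to 0$ this coefficient tends to $\frac{3-2\alpha}{2}$, which explains the restriction $\alpha > \frac{3}{2}$.

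Given the descent inequality, monotonicity of $\{\mathcal{M}_{\gamma}(u^{t},v^{t},x^{t})\}$ is immediate. For \eqref{lim} I would use the existence of a cluster point $(u^{*}, v^{*}, x^{*})$: along a convergent subsequence, continuity of $f$ and of the quadratic terms together with $g(v^{t_{k}}) \to g(v^{*})$ (the $\limsup$ bound coming from prox-optimality of the $v$-step, the $\liminf$ from lower semicontinuity) give $\mathcal{M}_{\gamma}(u^{t_{k}}, v^{t_{k}}, x^{t_{k}}) \to \mathcal{M}_{\gamma}(u^{*}, v^{*}, x^{*})$. Since a nonincreasing sequence with a convergent subsequence converges to the same finite limit, $\mathcal{M}_{\gamma}(u^{t},v^{t},x^{t})$ converges; telescoping the descent estimate then yields $\sum_{t} \|u^{t+1}-u^{t}\|^{2} < \infty$, so $\|u^{t+1}-u^{t}\| \to 0$. \cref{opt-con39} propagates this to $\|x^{t+1}-x^{t}\| \to 0$, and because $x^{t+1} - x^{t} = v^{t+1} - u^{t+1}$ this is exactly \eqref{lim}.

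Finally, for the characterization of cluster points, \eqref{lim} forces $v^{*} = u^{*}$ (vanishing increments also let me shift the index and conclude $u^{t_{k}+1}, v^{t_{k}+1} \to v^{*}$). To obtain \eqref{lastopti}, I would add \eqref{opt01} and \eqref{opt02}: the subgradient element $s^{t+1} := -\frac{1}{\gamma}(v^{t+1} + x^{t} - \alpha u^{t+1}) \in \partial g(v^{t+1})$ satisfies $\nabla f(u^{t+1}) + s^{t+1} = \frac{1}{\gamma}\big[(\alpha-1)u^{t+1} - v^{t+1}\big]$. Passing to the limit along the subsequence, Lipschitz continuity of $\nabla f$ gives $\nabla f(u^{t_{k}+1}) \to \nabla f(v^{*})$, while the robustness property \eqref{subdiffproperty} of the limiting subdifferential (using $v^{t_{k}+1} \to v^{*}$, $s^{t_{k}+1} \to s^{*}$, and $g(v^{t_{k}+1}) \to g(v^{*})$) gives $s^{*} \in \partial g(v^{*})$. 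The right-hand side converges to $\frac{1}{\gamma}(\alpha-2)v^{*}$, so $\nabla f(v^{*}) + s^{*} + \frac{1}{\gamma}(2-\alpha)v^{*} = 0$, which is \eqref{lastopti}. The only delicate points here are the value convergence $g(v^{t_{k}+1}) \to g(v^{*})$ needed to invoke \eqref{subdiffproperty}, and --- as noted --- the exact bookkeeping of constants in the descent step, which is where I expect essentially all of the difficulty to lie.
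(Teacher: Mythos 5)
Your proposal is correct and follows essentially the same route as the paper: a three\-part sufficient-decrease estimate for $\mathcal{M}_{\gamma}$ built from the prox-optimality of the $v$-step, the $l$-weak convexity and optimality of the $u$-step, and \cref{opt-con39} to convert $\|x^{t}-x^{t-1}\|$ into $(1+\gamma L)\|u^{t+1}-u^{t}\|$, yielding exactly the coefficient in \eqref{gammcond}; the telescoping, the $\limsup$/$\liminf$ argument for $g(v^{t_k})\to g(v^*)$, and the passage to the limit in \eqref{opt01}--\eqref{opt02} via \eqref{subdiffproperty} all match the paper's argument. The remaining work is only the bookkeeping you already flagged.
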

\begin{remark}\label{remark01}Notice that
\begin{itemize}
\item[1.] The $\gamma$ that satisfies \eqref{gammcond} always exists when $\frac{3}{2} < \alpha \leq 2$. Indeed, if $\frac{3}{2} < \alpha \leq 2$, then 
\begin{equation}
\lim_{\gamma \to 0}  \left[ \frac{4 - \alpha}{2}( 1 + \gamma L)^{2} + \frac{9 - 2\alpha}{2}\gamma l - \frac{1 + \alpha}{2} \right] = \frac{3 - 2\alpha}{2}  < 0.
\end{equation}
Therefore, given $l \in \mathbb{R}$, $L > 0$ and $\frac{3}{2} < \alpha \leq 2$, we can always find some $\gamma_0$ such that the condition \eqref{gammcond} holds for $\gamma \in (0, \gamma_0)$;
\item[2.]  From \eqref{lastopti} we know that when \cref{alg:GDR} is applied to problem \eqref{3-model}, any cluster point of the sequence generated by \cref{alg:GDR} is a critical point of problem  \eqref{3-model} with an additional regularization term $\frac{2 - \alpha}{2\gamma}\| u \|^2$. Therefore, if we substitute $\tilde{g} = g - \frac{2 - \alpha}{2\gamma} \| u \|^2$ for $g$ in \cref{alg:GDR}, the corresponding clustering point is the critical point of problem \eqref{3-model}.
\end{itemize} 
\end{remark} 

\begin{proof} We first verify the nonincreasing property of  $\mathcal{M}_{\gamma} ( u^{t}, v^{t}, x^{t} )$. Firstly, using the equation \eqref{meritfun} and the definition of $x$-update, we can get
\begin{equation}\label{x}
\aligned
&\mathcal{M}_{\gamma}(u^{t+1}, v^{t+1}, x^{t+1}) - \mathcal{M}_{\gamma}(u^{t+1}, v^{t+1}, x^{t})\\
&= \frac{1}{\gamma} \left\langle x^{t+1} - x^{t}, v^{t+1} - u^{t+1} \right\rangle\\
&= \frac{1}{\gamma}\| x^{t+1} - x^{t} \|^{2}.
\endaligned
\end{equation}
Secondly, employing  \eqref{meritfun1} and the fact that $v^{t+1}$ is a minimizer, we have
\begin{equation}\label{v}
\aligned
&\mathcal{M}_{\gamma}(u^{t+1}, v^{t+1}, x^{t}) - \mathcal{M}_{\gamma}(u^{t+1}, v^{t}, x^{t})\\
&= g(v^{t+1}) + \frac{1}{2\gamma} \| \alpha u^{t+1} - v^{t+1} - x^{t} \|^{2} - \frac{1}{\gamma} \| u^{t+1} - v^{t+1} \|^{2}\\
& ~~~  - g(v^{t}) - \frac{1}{2\gamma} \| \alpha u^{t+1} - v^{t} - x^{t} \|^{2} + \frac{1}{\gamma} \| u^{t+1} - v^{t} \|^{2}\\
& \leq \frac{1}{\gamma}( \| u^{t+1} - v^{t} \|^{2} - \| u^{t+1} - v^{t+1} \|^{2} )\\
&= \frac{1}{\gamma} ( \| u^{t+1} - v^{t} \| ^{2} - \| x^{t+1} - x^{t} \|^{2} ),
\endaligned
\end{equation}
where the definition of $x^{t+1}$ is used in the last equality.  
Note that from equation \eqref{opt01}, we have
\begin{equation}\label{fopti}
\nabla \left( f + \frac{l}{2} \| \cdot \|^{2} \right) \left(  u^{t+1} \right)  = \frac{1}{\gamma} \left(  x^{t} - u^{t+1}  \right)  + l u^{t+1}. 
\end{equation}
Recall that $f + \frac{l}{2} \| \cdot \|^{2}$ is convex function,  by the monotonicity of the gradient of a convex function, we  obtain that for any $t \geq 1$, 
\begin{equation}\label{ineq1}
\left\langle \left( \frac{1}{\gamma} \left( x^{t} - u^{t+1} \right) + l u^{t+1} \right) - \left( \frac{1}{\gamma} \left( x^{t-1} - u^{t} \right) + l u^{t} \right),  u^{t+1} - u^{t} \right\rangle \geq 0, 
\end{equation}
which implies that 
\begin{equation} 
\left\langle u^{t+1} - u^{t}, x^{t} - x^{t-1} \right\rangle \geq ( 1 - \gamma l ) \| u^{t+1} - u^{t} \|^{2}. 
\end{equation}
Hence, we see further that
\begin{equation}\label{ineq2}
\aligned
&\| u^{t+1} - v^{t} \|^{2} = \| u^{t+1} - u^{t} + u^{t} - v^{t} \|^{2} = \| u^{t+1} - u^{t} - \left( x^{t} - x^{t-1}  \right) \|^{2}  \\
& \leq \| u^{t+1} - u^{t} \|^{2} - 2 ( 1 - \gamma l )\| u^{t+1} - u^{t} \|^{2} + \| x^{t} - x^{t-1} \|^{2}\\
& = ( -1 + 2\gamma l ) \| u^{t+1} - u^{t} \|^{2} + \| x^{t} - x^{t-1} \|^{2}. 
\endaligned
\end{equation}
Substituting \eqref{ineq2} to \eqref{v}, we obtain 
\begin{equation}\label{v2}
\aligned
&\mathcal{M}_{\gamma} ( u^{t+1}, v^{t+1}, x^{t} ) - \mathcal{M}_{\gamma} ( u^{t+1}, v^{t}, x^{t} )\\
& \leq -\frac{1}{\gamma} \| x^{t+1} - x^{t} \|^{2} + \frac{1}{\gamma} \Big[  ( -1 + 2\gamma l ) \| u^{t+1} - u^{t} \|^{2} + \| x^{t} - x^{t-1} \|^{2} \Big]. \\
\endaligned
\end{equation}
Finally, from  \eqref{meritfun58} we get that
\begin{equation}\label{u}
\aligned
&\mathcal{M}_{\gamma}( u^{t+1}, v^{t}, x^{t} ) - \mathcal{M}_{\gamma}( u^{t}, v^{t}, x^{t} )\\
&= f( u^{t+1} ) + \frac{1}{2\gamma}\| x^{t} - u^{t+1} \|^{2} + \frac{1}{\gamma}\left\langle ( 2 - \alpha ) u^{t+1}, v^{t} - u^{t+1} \right\rangle + \frac{2 - \alpha}{2\gamma}\| u^{t+1} \|^{2}\\
& ~~~ - f( u^{t} ) - \frac{1}{2\gamma}\| x^{t} - u^{t} \|^{2} - \frac{1}{\gamma}\left\langle ( 2 - \alpha ) u^{t}, v^{t} - u^{t} \right\rangle - \frac{2 - \alpha}{2\gamma}\| u^{t} \|^{2}.
\endaligned
\end{equation}
By the conditions \eqref{gammcond} and $0< \alpha \leq 2$ we have $l < \frac{1 + \alpha}{8 -2\alpha +1} \frac{1}{\gamma} < \frac{1}{\gamma}$.  Therefore,  $f + \frac{1}{2\gamma} \| x^{t} - \cdot \|^{2}$ is a strongly convex function with modulus $\frac{1}{\gamma} - l $. This together with the definition of $u^{t+1}$ gives  
\begin{equation}\label{f strconvex}
\aligned
&f( u^{t+1} ) + \frac{1}{2\gamma} \| x^{t} - u^{t+1} \|^{2} - f( u^{t} ) - \frac{1}{2\gamma} \| x^{t} - u^{t} \|^{2}\\
& \leq -\frac{1}{2} \left(  \frac{1}{\gamma} - l  \right)  \| u^{t+1} - u^{t} \|^{2}.
\endaligned
\end{equation}
Hence, we have 
\begin{equation}\label{u1}
\aligned
&\mathcal{M}_{\gamma}( u^{t+1}, v^{t}, x^{t} ) - \mathcal{M}_{\gamma}( u^{t}, v^{t}, x^{t} )\\
& \leq -\frac{1}{2} \left(  \frac{1}{\gamma} - l  \right)  \| u^{t+1} - u^{t} \|^{2} + \frac{2 - \alpha}{\gamma} \left\langle u^{t+1}, v^{t} - u^{t+1} \right\rangle + \frac{2 - \alpha}{2\gamma}\| u^{t+1} \|^{2}\\
& ~~~  - \frac{2 - \alpha}{\gamma}\left\langle u^{t}, v^{t} - u^{t} \right\rangle - \frac{2 - \alpha}{2\gamma} \| u^{t} \|^{2}\\
&= -\frac{1}{2} \left( \frac{1}{\gamma} - l  \right)  \| u^{t+1} - u^{t} \|^{2} + \frac{2 - \alpha}{\gamma} \left\langle u^{t+1} - u^{t}, v^{t} - u^{t+1} \right\rangle\\
& ~~~  + \frac{2 - \alpha}{\gamma}\left\langle u^{t}, u^{t} - u^{t+1} \right\rangle + \frac{2 - \alpha}{2\gamma} \| u^{t+1} \|^{2} - \frac{2 - \alpha}{2\gamma}\| u^{t} \|^{2}\\
&\leq -\frac{1}{2} \left( \frac{1}{\gamma} - l \right)  \| u^{t+1} - u^{t} \|^{2} + \frac{2 - \alpha}{2\gamma}\| u^{t+1} - u^{t} \|^{2} + \frac{2 - \alpha}{2\gamma}\| v^{t} - u^{t+1} \|^{2}\\
& ~~~  + \frac{2 - \alpha}{\gamma}\left\langle u^{t}, u^{t} - u^{t+1} \right\rangle + \frac{2 - \alpha}{2\gamma} \| u^{t+1} \|^{2} - \frac{2 - \alpha}{2\gamma}\| u^{t} \|^{2}\\ 
&\leq \left[ -\frac{1}{2} \left( \frac{1}{\gamma} - l \right)  + \frac{2 - \alpha}{\gamma} \right] \| u^{t+1} - u^{t} \|^{2} + \frac{2 - \alpha}{2\gamma}\| v^{t} - u^{t+1} \|^{2},
\endaligned
\end{equation}
where we have used the elementary fact $2 \left\langle u^{t}, u^{t} - u^{t+1} \right\rangle + \| u^{t+1} \|^{2} - \| u^{t} \|^{2} = \| u^{t} - u^{t+1} \|^{2}$ in the last inequality.   By using \eqref{ineq2} and Lemma \ref{opt-con39}, we obtain further

\begin{equation}\label{u3}
\aligned
&\mathcal{M}_{\gamma}( u^{t+1}, v^{t}, x^{t} ) - \mathcal{M}_{\gamma}( u^{t}, v^{t}, x^{t} )\\
& \leq \left[ -\frac{1}{2} \left( \frac{1}{\gamma} - l  \right)  + \frac{2 - \alpha}{\gamma} \right] \| u^{t+1} - u^{t} \|^{2} \\
&~~~~ + \frac{2 - \alpha}{2\gamma}\bigg[ ( -1 + 2\gamma l )\| u^{t+1} - u^{t} \|^{2} + \| x^{t} - x^{t-1} \|^{2} \bigg]\\
& \leq \left[ -\frac{1}{2} \left( \frac{1}{\gamma} - l \right)  + \frac{ 2 - \alpha}{\gamma}  + \frac{2 - \alpha}{2\gamma} ( -1 + 2 \gamma l )  + \frac{2 - \alpha}{2\gamma}( 1 + \gamma L)^{2} \right]  \| u^{t+1} - u^{t} \|^{2}. 
\endaligned
\end{equation}

Summing up \eqref{x}, \eqref{v2} and \eqref{u3} we obtain
\begin{equation}\label{endeq}
\aligned
&\mathcal{M}_{\gamma}( u^{t+1}, v^{t+1}, x^{t+1} ) - \mathcal{M}_{\gamma}( u^{t}, v^{t}, x^{t} )\\
&\leq \frac{1}{\gamma} \left[ \frac{4 - \alpha}{2} ( 1 + \gamma L)^{2} + \frac{ 8 - 2\alpha +1}{2} \gamma l - \frac{1 + \alpha}{2} \right] \| u^{t+1} - u^{t} \|^{2}\\
& =:-A \| u^{t+1} - u^{t} \|^{2}. 
\endaligned
\end{equation}

Notice the constant $A>0$  by the choice of $\gamma$ and $\alpha$. Therefore, ${\mathcal{M}_{\gamma}( u^{t}, v^{t}, x^{t} )}_{t \geq 1}$ is nonincreasing.

Summing  \eqref{endeq}  from $t = 1$ to $N - 1 \geq 1$, we  get
\begin{equation}\label{sum}
\mathcal{M}_{\gamma} ( u^{N}, v^{N}, x^{N} ) - \mathcal{M}_{\gamma} ( u^{1}, v^{1}, x^{1} ) \leq -A \sum_{t=1}^{N} \| u^{t+1} - u^{t} \|^{2}.
\end{equation}
Therefore, if there exists a cluster point $( u^{*}, v^{*}, x^{*} )$ with a convergent subsequence $lim_{ j \to \infty} ( u^{t_{j}}, v^{t_{j}}, x^{t_{j}} ) = (u^{*}, v^{*}, x^{*} )$, since $\mathcal{M}_{\gamma} $ is lower semi-continuity function and $f$, $g$ are both proper functions, then taking limit as $j \to \infty$ with $N = t_{j}$ in \eqref{sum}, we obtain 
\begin{equation}\label{sumineq}
- \infty < \mathcal{M}_{\gamma}( u^{*}, v^{*}, x^{*} ) - \mathcal{M}_{\gamma}( u^{1}, v^{1}, x^{1} ) \leq -A  \sum_{t=1}^{\infty} \| u^{t+1} - u^{t} \|^{2}. 
\end{equation}
This implies immediately that $lim_{t \to \infty} \| u^{t+1} - u^{t} \|^{2} = 0$.  From Lemma \ref{opt-con39}  we conclude that \eqref{lim} holds.  Furthermore, using the third relation in \eqref{alg}, we get further that $lim_{t \to \infty}\| v^{t+1} - v^{t} \| = 0$. Hence, if $( u^{*}, v^{*}, x^{*} )$ is a cluster point of $\{ ( u^{t}, v^{t}, x^{t} )\}_{t \geq 1}$, that is, the latter has a subsequence $\{ ( u^{t_{j}}, v^{t_{j}}, x^{t_{j}} )\}$ fulfilling $\{ ( u^{t_{j}}, v^{t_{j}}, x^{t_{j}} )\} \to ( u^{*}, v^{*}, x^{*} )$ as $j \to \infty$, then
\begin{equation}\label{sublim}
lim_{j \to \infty} ( u^{t_{j}}, v^{t_{j}}, x^{t_{j}} ) = lim_{j \to \infty} ( u^{t_{j - 1}}, v^{t_{j - 1}}, x^{t_{j - 1}} ) = ( u^{*}, v^{*}, x^{*} ). 
\end{equation}
Since  $v^{t}$ is a minimizer of the second relation in \eqref{alg}, we obtain
\begin{equation}\label{gmin}
g( v^{t} ) + \frac{1}{2\gamma} \| \alpha u^{t} - v^{t} - x^{t-1} \|^{2} \leq g( v^{*} ) + \frac{1}{2\gamma} \| \alpha u^{t} - v^{*} - x^{t-1} \|^{2}. 
\end{equation}
Taking  limit along the convergent subsequence and using \eqref{sublim} yields
\begin{equation}\label{supineq}
\lim \sup_{j \to \infty} g( v^{t_{j}} ) \leq g( v^{*} )
\end{equation}
On the other hand, we have $\lim\inf_{j \to \infty} g( v^{t_{j}} ) \geq g( v^{*} )$ because of the lower semicontinuity of $g$.  Hence
\begin{equation}\label{mmm}
\lim_{j \to \infty} g( v^{t_{j}} ) = g( v^{*} ). 
\end{equation}

Summing \eqref{opt01} and \eqref{opt02} and then taking limit along the convergent subsequence $\{(u^{t_j}, v^{t_j}, x^{t_j})\}$, and  applying \eqref{mmm}, \eqref{lim} and \eqref{subdiffproperty}, the conclusion of the theorem follows immediately.
\end{proof}

We know from the previous theorem that the global subsequence convergence of \cref{alg:GDR} is based on the assumption that there is a cluster point of the sequence. Next we give a sufficient condition to guarantee the boundedness of the sequence generated, and so the existence of cluster  point.
\begin{theorem}\label{the3}$($Boundedness of the sequence generated from the PDR splitting method$)$ Assume that the functions $f$ and $g$ satisfy \cref{ass1}. Let $\alpha \in (\frac{3}{2}, 2] $ and  $\gamma$ satisfy \eqref{gammcond}. Suppose that both $f$ and $g$ are bounded from below, and that at least one of them is coercive. Then the sequence $\{ ( u^{t}, v^{t}, x^{t} )\}$ generated by \eqref{alg} is bounded.
\end{theorem}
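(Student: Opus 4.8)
The plan is to leverage the monotonicity already proved in \cref{the1}: since $\{\mathcal{M}_{\gamma}(u^t,v^t,x^t)\}$ is nonincreasing, it is bounded above by its initial value $M_0 := \mathcal{M}_{\gamma}(u^1,v^1,x^1)$. The whole difficulty is then to convert this \emph{single} upper bound into coercive control on the triple $(u^t,v^t,x^t)$. I would do this by evaluating the merit function along the iterates and using the optimality relations to eliminate $x^t$.

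First I would express $x^t$ through $v^t$ and $\nabla f(u^t)$. The optimality condition \eqref{opt01} (at index $t$) reads $x^{t-1}-u^t=\gamma\nabla f(u^t)$, and combining it with the update $x^t=x^{t-1}+(v^t-u^t)$ gives the clean relations
\[
x^t-v^t=\gamma\nabla f(u^t),\qquad x^t-u^t=(v^t-u^t)+\gamma\nabla f(u^t).
\]
Substituting these into the form \eqref{meritfun58}, the two $\gamma^2\|\nabla f(u^t)\|^2$ contributions cancel and, after collecting the linear-in-$u^t$ pieces via $u^t+(v^t-u^t)=v^t$, one reaches the identity
\[
\mathcal{M}_{\gamma}(u^t,v^t,x^t)=f(u^t)+g(v^t)+\tfrac{\alpha-1}{2\gamma}\|v^t-u^t\|^2+\langle v^t-u^t,\nabla f(u^t)\rangle+\tfrac{2-\alpha}{2\gamma}\|v^t\|^2 .
\]
The key step is to push the $f$-terms onto $v^t$: the descent lemma for the $L$-smooth $f$ gives $f(u^t)+\langle\nabla f(u^t),v^t-u^t\rangle\ge f(v^t)-\tfrac{L}{2}\|v^t-u^t\|^2$, so that
\[
M_0\ge \mathcal{M}_{\gamma}(u^t,v^t,x^t)\ge f(v^t)+g(v^t)+\Big(\tfrac{\alpha-1}{2\gamma}-\tfrac{L}{2}\Big)\|v^t-u^t\|^2+\tfrac{2-\alpha}{2\gamma}\|v^t\|^2 .
\]

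To conclude I would first verify that, under \eqref{gammcond} with $l=L$, the coefficient $\tfrac{\alpha-1}{2\gamma}-\tfrac{L}{2}$ is strictly positive: \eqref{gammcond} forces $(1+\gamma L)^2<\tfrac{1+\alpha}{4-\alpha}\le\alpha^2$ for $\alpha\in(\tfrac32,2]$, hence $\gamma L<\alpha-1$, while $\tfrac{2-\alpha}{2\gamma}\ge0$ since $\alpha\le2$. Because both $f$ and $g$ are bounded below, dropping the nonnegative squared terms yields $f(v^t)+g(v^t)\le M_0$; as one of $f,g$ is coercive and the other is bounded below, this forces $\{v^t\}$ to be bounded. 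Dropping instead the $\|v^t\|^2$ term and using $f\ge\inf f$, $g\ge\inf g$ gives $\big(\tfrac{\alpha-1}{2\gamma}-\tfrac{L}{2}\big)\|v^t-u^t\|^2\le M_0-\inf f-\inf g$, so $\{v^t-u^t\}$ is bounded, whence $\{u^t\}$ is bounded. Finally $\{\nabla f(u^t)\}$ is bounded by Lipschitz continuity, and $x^t=v^t+\gamma\nabla f(u^t)$ then shows $\{x^t\}$ is bounded, giving the claim.

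The main obstacle I anticipate is exactly the reformulation in the second paragraph. A naive attempt that retains $f(u^t)$ and invokes coercivity of $f$ directly is defeated by the cross term $\langle v^t-u^t,\nabla f(u^t)\rangle$, whose Cauchy--Schwarz/Young estimate introduces a $\|\nabla f(u^t)\|^2\sim\|u^t\|^2$ contribution that mere coercivity—an unquantified growth condition—cannot dominate. Routing everything through $v^t$ via the descent lemma is what removes this obstruction and makes the coercivity hypothesis usable; spotting the exact cancellation $x^t-v^t=\gamma\nabla f(u^t)$, which kills the quadratic gradient term, is the crux of the argument.
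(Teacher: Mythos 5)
Your proof is correct, but it takes a genuinely different route from the paper's. The paper keeps the merit function anchored at $u^t$: it uses $\|x^{t-1}-u^t\|=\gamma\|\nabla f(u^t)\|$ together with the descent lemma in the form $\zeta^*\le f(x)-\frac{1}{2L}\|\nabla f(x)\|^2$ to absorb the negative $\gamma^2\|\nabla f(u^t)\|^2$ contribution into a fraction $(1-\mu)f(u^t)$ of the objective, arriving at a lower bound of the form $\mu f(u^t)+g(v^t)+(\text{nonneg.})\|\nabla f(u^t)\|^2+\frac{2\alpha-3}{2\gamma}\|x^t-u^t\|^2$, and then runs two separate case analyses depending on whether $f$ or $g$ is coercive, each with its own chain of boundedness deductions. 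You instead re-express $\mathcal{M}_\gamma$ through $v^t$, $v^t-u^t$ and $\nabla f(u^t)$ using the exact relation $x^t-v^t=x^{t-1}-u^t=\gamma\nabla f(u^t)$ (which makes the quadratic gradient terms cancel identically), and then apply the descent lemma in its standard quadratic-upper-bound form to convert $f(u^t)+\langle\nabla f(u^t),v^t-u^t\rangle$ into $f(v^t)-\frac{L}{2}\|v^t-u^t\|^2$; I checked your identity and it is exact. This buys you a cleaner terminal bound $M_0\ge f(v^t)+g(v^t)+c_1\|v^t-u^t\|^2+c_2\|v^t\|^2$ with $c_1,c_2\ge 0$, which handles both coercivity cases at once and avoids the paper's somewhat awkward choice of $\mu$ and ``$\alpha\sim 2$'' (awkward because $\alpha$ is fixed in the theorem's hypotheses). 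One small caveat applies equally to both arguments: deducing $\gamma L<\alpha-1$ (resp.\ the paper's smallness of $\gamma L$) from \eqref{gammcond} requires dropping the $\frac{9-2\alpha}{2}\gamma l$ term, which is only legitimate when $l\ge 0$; since the paper itself takes $l=L$ or $l=0$ throughout, this is harmless, but it would be worth stating explicitly.
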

\begin{proof} Since $f$ is bounded from below, that is, there exits $\zeta^{*} > -\infty$ such that for any $x$,  we have
\begin{equation}\label{ineq31}
\zeta^{*} \leq f\left(  x - \frac{1}{L} \nabla f(x) \right) \leq f(x) - \frac{1}{2L} \| \nabla f(x) \|^{2}, 
\end{equation}  
where the second inequality follows  from the descent lemma in \cite{BC}. Furthermore, based on the choice of $\gamma$ and $\alpha$, we have from \cref{the1} that for all $ t \geq 1$,
\begin{equation}\label{ineq32}
\mathcal{M}_{\gamma} ( u^{t}, v^{t}, x^{t} ) \leq \mathcal{M}_{\gamma} ( u^{1}, v^{1}, x^{1} )
\end{equation}
Using  \eqref{meritfun58}, we have for $t \geq 1$ that
\begin{equation}\label{ineq33}
\aligned
\mathcal{M}_{\gamma} ( u^{t}, v^{t}, x^{t} ) & = f(u^{t}) + g(v^{t}) - \frac{1}{2\gamma} \| x^{t} - v^{t} \|^{2} + \frac{1}{2\gamma} \| x^{t} - u^{t} \|^{2} \\
&~~~~  + \frac{1}{\gamma} \left\langle (2-\alpha)u^{t}, v^{t} - u^{t} \right\rangle + \frac{2 - \alpha}{2\gamma}\| u^{t} \|^{2}\\
& = f(u^{t}) + g(v^{t}) - \frac{1}{2\gamma} \| x^{t-1} - u^{t} \|^{2} + \frac{1}{2\gamma} \| x^{t} - u^{t} \|^{2} \\
&~~~~  + \frac{2 - \alpha}{\gamma} \left\langle u^{t}, v^{t} - u^{t} \right\rangle + \frac{2 - \alpha}{2\gamma}\| u^{t} \|^{2},
\endaligned
\end{equation}
where the last equality follows from the third relation in \eqref{alg}. Next, we  see  from \eqref{opt01} for any $t \geq 1$ that  
\begin{equation}\label{ineq34}
0 = \nabla f(u^{t}) + \frac{1}{\gamma} ( u^{t} - x^{t-1} ), 
\end{equation}
which implies that  $ \| x^{t-1} - u^{t} \|^{2} = \gamma^{2} \| \nabla f(u^{t})\|^{2}$.  Further, because of \eqref{gammcond}, we can choose $\alpha \sim 2, \mu \in (0, 1)$ such that $-(\frac{1}{2\gamma} + \frac{2 - \alpha}{\gamma}) \gamma^{2} \geq -\frac{1 - \mu}{2L}$. Combining these with \eqref{ineq32} and \eqref{ineq33}, we  obtain that
\begin{equation}\label{ineq35}
\aligned
&\mathcal{M}_{\gamma}( u^{1}, v^{1}, x^{1} ) \geq \mathcal{M}_{\gamma}( u^{t}, v^{t}, x^{t} )\\
&= f(u^{t}) + g(v^{t}) - \frac{1}{2\gamma} \| x^{t-1} - u^{t} \|^{2} + \frac{1}{2\gamma} \| x^{t} - u^{t} \|^{2} + \frac{2 - \alpha}{\gamma} \left[ \left\langle u^{t}, v^{t} - u^{t} \right\rangle +\frac{\| u^{t} \|^{2}}{2} \right] \\
&\geq f(u^{t}) + g(v^{t}) - \frac{1}{2\gamma} \| x^{t-1} - u^{t} \|^{2} + \frac{1}{2\gamma} \| x^{t} - u^{t} \|^{2} - \frac{2 - \alpha}{2\gamma} \| v^{t} - u^{t} \|^{2}\\
&\geq f(u^{t}) + g(v^{t}) - \frac{1}{2\gamma} \left( \| x^{t-1} - u^{t} \|^{2} - \| x^{t} - u^{t} \|^{2} \right) - \frac{2 - \alpha}{\gamma} \| x^{t} - u^{t} \|^{2}  \\
&~~~~ + \frac{2 - \alpha}{\gamma} \| u^{t} - x^{t-1} \|^{2}  \\
&= f(u^{t}) + g(v^{t}) - ( \frac{1}{2\gamma} + \frac{2 - \alpha}{\gamma} ) \| x^{t-1} - u^{t} \|^{2} + ( \frac{1}{2\gamma} - \frac{2 - \alpha}{\gamma} ) \| x^{t} - u^{t} \|^{2}\\
&\geq f(u^{t}) + g(v^{t}) - ( \frac{1}{2\gamma} + \frac{2 - \alpha}{\gamma} ) \gamma^{2} \| \nabla f(u^{t}) \|^{2} + \frac{2\alpha - 3}{2\gamma} \| x^{t} - u^{t} \|^{2}\\
&= \mu f(u^{t}) + (1 - \mu)f(u^{t}) - \frac{1 - \mu}{2L} \| \nabla f(u^{t}) \|^{2} + \left[ \frac{1 - \mu}{2L} - ( \frac{1}{2\gamma} + \frac{2 - \alpha}{\gamma} ) \gamma^{2} \right] \| \nabla f(u^{t}) \|^{2} \\
&~~~~ + g( v^{t} ) + \frac{2\alpha - 3}{2\gamma} \| x^{t} - u^{t} \|^{2} \\
&\geq \mu f(u^{t}) + (1 - \mu) \zeta^{*} + \left[ \frac{1 - \mu}{2L} - ( \frac{1}{2\gamma} + \frac{2 - \alpha}{\gamma} ) \gamma^{2} \right] \| \nabla f(u^{t}) \|^{2} \\
&~~~~ + g( v^{t} ) + \frac{2\alpha - 3}{2\gamma} \| x^{t} - u^{t} \|^{2}, 
\endaligned
\end{equation}

where the first inequality follows from the Cauchy inequality $ab \leq \frac{a^2}{2} + \frac{b^2}{2}$,  the second inequality follows from the third relation in \eqref{alg},  the third inequality follows from the optimal condition \eqref{opt01} and the last inequality follows from \eqref{ineq31}. \\

First, we assume that $g$ is coercive. Notice from \eqref{ineq35} that $\{ v^{t} \}, \{ \nabla f(u^{t}) \}$ and $\{ x^{t} - u^{t} \}$ are bounded. We also obtain from $\eqref{ineq34}$ that $\{ u^{t} - x^{t-1} \}$ is bounded. This combine with the boundedness of $\{ x^{t} - u^{t} \}$ shows that $\{  x^{t} - x^{t-1} \}$ is also bounded. Using the third relation in \eqref{alg}, we derive that $\{ v^{t} - u^{t} \}$ is bounded. Note that we have proved that $\{v^t\}$ is bounded,  hence $\{u^t\}$ is also bounded.   The boundedness of $\{ x^{t} \}$ follows immediately from the boundedness of $\{ x^{t} - u^{t} \}$. 

Finally,  suppose that $f$ is coercive. Then we know  from \eqref{ineq35} that $\{ u^{t} \}$ and $\{ x^{t} - u^{t} \}$ are bounded. This shows that  $\{ x^{t} \}$ is also bounded. We now obtain easily the boundedness of $\{ v^{t} \}$ by the third relation in \eqref{alg}. The proof is completed. 
\end{proof}

Furthermore, assume additionally that the functions $f$ and $g$ are semi-algebraic, we will show in the next theorem that, if the sequence generated by \eqref{alg} has a cluster point, then it is actually convergent.

\begin{theorem}\label{the2} $($Global convergence of the whole sequence$)$ 
Suppose that $0 < \alpha \leq 2$ and the step-size parameter $\gamma>0$ is chosen so that  \eqref{gammcond} holds.  Suppose additionally that the functions $f$ and $g$ are semi-algebraic. If the sequence $\{(u^t, v^t, x^t)\}$ generated by \eqref{alg} has a cluster point, then the whole sequence $\{(u^t, v^t, x^t)\}$ is convergent.  
\end{theorem}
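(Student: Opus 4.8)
The plan is to run the standard Kurdyka--\L ojasiewicz (KL) convergence machinery, using the merit function $\mathcal{M}_{\gamma}$ of \eqref{meritfun} as a Lyapunov function. Writing $z^{t} := (u^{t}, v^{t}, x^{t})$, the three ingredients I need are a sufficient-decrease estimate, a relative-error (subgradient) bound, and the KL property of $\mathcal{M}_{\gamma}$; together these will force $\sum_{t}\|u^{t+1}-u^{t}\|$ to be finite, which makes $\{z^{t}\}$ Cauchy.

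First I would collect what \cref{the1} already supplies. Since a cluster point $z^{*}=(u^{*},v^{*},x^{*})$ exists, \eqref{sumineq} gives $\|u^{t+1}-u^{t}\|\to 0$, and \eqref{lim} together with the relation $v^{t}=u^{t}+x^{t}-x^{t-1}$ gives $\|v^{t+1}-v^{t}\|\to 0$ and $\|x^{t+1}-x^{t}\|\to 0$. The decrease estimate \eqref{endeq}, namely $\mathcal{M}_{\gamma}(z^{t})-\mathcal{M}_{\gamma}(z^{t+1})\geq A\|u^{t+1}-u^{t}\|^{2}$ with $A>0$, shows $\{\mathcal{M}_{\gamma}(z^{t})\}$ is nonincreasing; combining this with the subsequential continuity secured by \eqref{mmm} (so that $\mathcal{M}_{\gamma}(z^{t_{j}})\to\mathcal{M}_{\gamma}(z^{*})$) shows the whole sequence $\mathcal{M}_{\gamma}(z^{t})$ decreases to $\ell:=\mathcal{M}_{\gamma}(z^{*})$. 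The one genuinely new computation is the subgradient bound: differentiating \eqref{meritfun} block by block (the $v$-block carrying the only nonsmooth piece $\partial g(v)$) and substituting the optimality conditions \eqref{opt01} and \eqref{opt02} together with $v^{t+1}-u^{t+1}=x^{t+1}-x^{t}$, I expect all three partial subgradients of $\mathcal{M}_{\gamma}$ at $z^{t+1}$ to collapse into multiples of $x^{t+1}-x^{t}$, yielding an element $w^{t+1}\in\partial\mathcal{M}_{\gamma}(z^{t+1})$ with $\|w^{t+1}\|\leq C\|x^{t+1}-x^{t}\|$ for an explicit $C=C(\alpha,\gamma)$. Shifting the index and invoking \cref{opt-con39} (that is, $\|x^{t}-x^{t-1}\|\leq(1+\gamma L)\|u^{t+1}-u^{t}\|$) then converts this into $\operatorname{dist}(0,\partial\mathcal{M}_{\gamma}(z^{t}))\leq b\,\|u^{t+1}-u^{t}\|$, the relative error phrased in the very gap that drives the decrease.

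Next, since $f$ and $g$ are semi-algebraic and the remaining terms of $\mathcal{M}_{\gamma}$ are polynomials in $(u,v,x)$, the function $\mathcal{M}_{\gamma}$ is semi-algebraic on $\mathbb{R}^{3n}$ and hence a KL function. I would then carry out the usual finite-length estimate at $z^{*}$: applying the KL inequality at $z^{t}$, the concavity of $\varphi$, the decrease bound, and the relative-error bound, the telescoping difference $\varphi(\mathcal{M}_{\gamma}(z^{t})-\ell)-\varphi(\mathcal{M}_{\gamma}(z^{t+1})-\ell)$ dominates a fixed multiple of $\|u^{t+1}-u^{t}\|$. Summing yields $\sum_{t}\|u^{t+1}-u^{t}\|<\infty$, so $\{u^{t}\}$ is Cauchy and converges; \cref{opt-con39} then makes $\{x^{t}-x^{t-1}\}$ summable, hence $\{x^{t}\}$ converges, and $v^{t}=u^{t}+x^{t}-x^{t-1}$ converges as well, giving convergence of the whole sequence.

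The main obstacle I anticipate is not a single calculation but the localization of the KL step: the hypothesis only asserts that a cluster point exists, not that $\{z^{t}\}$ is bounded, so I cannot simply invoke compactness of the full limit set. I would resolve this with the standard trapping induction, choosing an index at which $z^{t}$ is close enough to $z^{*}$ and the gap $\mathcal{M}_{\gamma}(z^{t})-\ell$ is small enough that the finite-length estimate itself keeps every subsequent iterate inside the neighborhood $U$ on which the KL inequality holds, so that the summation is legitimate along the entire tail. Making this induction airtight, while carefully tracking the one-step index shift between the decrease (controlled by the $u$-gap) and the subgradient bound (controlled by the $x$-gap), is where the care is required.
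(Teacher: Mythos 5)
Your proposal follows essentially the same route as the paper's proof: the same three KL ingredients (the decrease estimate \eqref{endeq}, a subgradient bound at $(u^{t+1},v^{t+1},x^{t+1})$ collapsing all blocks to multiples of $x^{t+1}-x^{t}$ and then to $\|u^{t+1}-u^{t}\|$ via \cref{opt-con39}, and semi-algebraicity of $\mathcal{M}_\gamma$), followed by the same trapping induction around the cluster point to legitimize the finite-length summation. The only detail the paper adds that you gloss over is the degenerate case $\mathcal{M}_\gamma(z^{t_0})=\theta^*$ for some finite $t_0$, which forces the sequence to be eventually constant and is handled separately before assuming $\mathcal{M}_\gamma(z^t)>\theta^*$ throughout.
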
 
\begin{proof}
We split the proof into three steps. 

{ {\bf Step 1.} {\it  There exists $\tau >0$ such that whenever $t \geq 1$, 
$$
\textmd{dist}  (0, \partial \mathcal{M}_\gamma (u^t, v^t, x^t)) \leq \tau \|u^{t+1} - u^t\|. 
$$
}}

We first compute the subdifferential of $\mathcal{M}_\gamma$ at $(u^{t+1}, v^{t+1}, x^{t+1})$. It is not difficult  to obtain that for any $t \geq 0$,  
$$
\nabla_x \mathcal{M}_\gamma \Big( u^{t+1}, v^{t+1}, x^{t+1} \Big) = \frac{1}{\gamma} \Big( v^{t+1} - u^{t+1} \Big)  =\frac{1}{\gamma} \Big( x^{t+1} - x^{t} \Big),  
$$
$$
\aligned
\nabla_u   \mathcal{M}_\gamma \Big( u^{t+1}, v^{t+1}, x^{t+1} \Big) & = \nabla f \Big( u^{t+1} \Big) + \frac{1}{\gamma} \Big( u^{t+1} - x^{t+1} \Big)  + \frac{2 - \alpha}{\gamma} \Big( v^{t+1} - u^{t+1} \Big) \\
& = \frac{\alpha -1}{\gamma} \Big(x^t - x^{t+1} \Big),
\endaligned 
$$
where the first gradient follows by using the definition of $u^{t+1}$, while the second gradient follows by using  \eqref{meritfun58} and the relation \eqref{opt01}. Moreover, for the subdifferential with respect to $v$, from  \eqref{meritfun58} we have 
$$
\aligned
\partial_v \mathcal{M}_\gamma \Big( u^{t+1}, v^{t+1}, x^{t+1} \Big) & = \partial g\Big( v^{t+1} \Big) - \frac{1}{\gamma} \Big( v^{t+1} - x^{t+1} \Big) + \frac{2 - \alpha}{\gamma} u^{t+1}  \\
& = \partial g\Big( v^{t+1} \Big) + \frac{1}{\gamma} \Big(  v^{t+1} + x^t - \alpha u^{t+1} \Big)  \\
& ~~~~ + \frac{1}{\gamma} \Big( 2u^{t+1} - x^{t}- v^{t+1} \Big) - \frac{1}{\gamma} \Big( v^{t+1} - x^{t+1} \Big) \\
& \ni - \frac{2}{\gamma} \Big( v^{t+1} - u^{t+1}\Big) + \frac{1}{\gamma} \Big( x^{t+1} - x^{t} \Big)= - \frac{1}{\gamma} \Big( x^{t+1} - x^{t} \Big), 
\endaligned
$$
where the inclusion follows from the relation \eqref{opt02} and the last equality follows from the definition of $x^{t+1}$. The above relations together with Lemma \ref{opt-con39}  imply that there exists $\tau >0$ (in particular, one may take $\tau=\frac{\alpha+3}{\gamma}$) such that whenever $t \geq 1$, we have 
\begin{equation}\label{dis01}
\textmd{dist}  (0, \partial \mathcal{M}_\gamma (u^t, v^t, x^t)) \leq \tau \|u^{t+1} - u^t\|. 
\end{equation}

{\bf Step 2. {\it The limit $\lim_{t \to \infty}\mathcal{M}_\gamma(u^t, v^t, x^t)$ exists and equals to  $\mathcal{M}_\gamma(u^*, v^*, x^*)$ for any cluster point $(u^*, v^*, x^*)$ of sequence $\{(u^t, v^t, x^t)\}$. 
}}

It follows from \eqref{endeq} that there exists $A >0 $ such that
\begin{equation}\label{AAa}
\mathcal{M}_\gamma(u^t, v^t, x^t) - \mathcal{M}_\gamma(u^{t+1}, v^{t+1}, x^{t+1}) \geq A \|u^{t+1} - u^t\|^2. 
\end{equation}
Hence, $\mathcal{M}_\gamma(u^t, v^t, x^t)$ is nonincreasing. Let $\{(u^{t_i}, v^{t_i}, x^{t_i})\}$ be a convergent subsequence which converges to $(u^*, v^*, x^*)$.  Then, by the lower semicontinuity of $\mathcal{M}_\gamma$, we know that the sequence $\{ \mathcal{M}_\gamma(u^{t_i}, v^{t_i}, x^{t_i}) \}$ is bounded below. This together with the nonincreasing property of $\mathcal{M}_\gamma(u^t, v^t, x^t)$ implies that $\mathcal{M}_\gamma(u^t, v^t, x^t)$ is also bounded below. Therefore,  $\lim_{t \to \infty}\mathcal{M}_\gamma(u^t, v^t, x^t) = \theta^*$ exists.  
We claim that $\theta^*=\mathcal{M}_\gamma(u^*, v^*, x^*)$. 

Indeed, let $\{(u^{t_j}, v^{t_j}, x^{t_j})\}$ be any sequence that converges to $(u^*, v^*, x^*)$. Then by the lower semicontinuity, we have
$$
\liminf_{j \to \infty} \mathcal{M}_\gamma(u^{t_j}, v^{t_j}, x^{t_j}) \geq \mathcal{M}_\gamma(u^*, v^*, x^*). 
$$
Moreover,  similar to \eqref{sublim}, \eqref{gmin} and \eqref{supineq},  we also have 
$$
\limsup_{j \to \infty} \mathcal{M}_\gamma(u^{t_j}, v^{t_j}, x^{t_j}) \leq \mathcal{M}_\gamma(u^*, v^*, x^*). 
$$
Now we easily get $\theta^*=\mathcal{M}_\gamma(u^*, v^*, x^*)$, as claimed. Note that if $\mathcal{M}_\gamma(u^{t_0}, v^{t_0}, x^{t_0}) =\theta^*$ for some $t_0 \geq 1$, then $\mathcal{M}_\gamma(u^{t_0+k}, v^{t_0+k}, x^{t_0+k}) = \mathcal{M}_\gamma(u^{t_0}, v^{t_0}, x^{t_0})$ for all $k\geq 0$ since the sequence is nonincreasing. Then from \eqref{AAa}, we have $u^{t_0+k}=u^{t_0}$ for all $k\geq 0$. By \eqref{ineq4}, we see that $x^{t_0+k}=x^{t_0}$ for all $k\geq 0$. These together with the third relation in \eqref{alg}  show that we also have $v^{t_0 +k} = v^{t_0 +1}$ for all $k\geq 1$. Thus, the sequence $(u^t, v^t, x^t)$ remains constant  starting with the $(t_0+1)$st iteration.   Hence, the theorem holds trivially when this happens. 
Next we  always assume that $\mathcal{M}_\gamma(u^{t}, v^{t}, x^{t})  > \theta^*$ for any $t \geq 1$. 
\\

{\bf Step 3. {\it $\{\| u^{t+1} - u^t\|\}$ is summable and end the proof.
}}
It follows from \cite{ABRS} and the semi-algebraic assumption  that the function
$$
(u, v, x) \longmapsto \mathcal{M}_\gamma (u, v, x)
$$
is a KL function. By the property of KL function (see Definition \ref{KL}), there exist $\eta >0$, a neighborhood $U$ of $(u^*, v^*, x^*)$ and a continuous concave function $\varphi : [0, \eta) \to \mathbb{R}_+$ such that for all $(u, v, x) \in U$ satisfying $\theta^* <  \mathcal{M}_\gamma(u, v, x) < \theta^* + \eta$, we have 
\begin{equation}\label{KL098}
\varphi^\prime ( \mathcal{M}_\gamma(u, v, x) - \theta^*) \textmd{dist} (0, \partial \mathcal{M}_\gamma(u, v, x) ) \geq 1. 
\end{equation}
Since $U$ is an open set, take $\rho >0$ such that 
$$
\mathbf{B}_\rho := \{ (u, v, x) : \|u - u^*\| < \rho, \|v - v^*\| < 2\rho, \|x - x^*\| < (2 + \gamma L) \rho\} \subseteq U 
$$
and set $B_\rho:= \{u: \|u - u^*\| <\rho \}$. From \eqref{opt01}, we can get
$$
\|x^t -x^*\| \leq \|x^t - x^{t-1}\| + \|x^{t-1} - x^*\| \leq \|x^t - x^{t-1}\| + (1 + \gamma L) \|u^t - u^*\|.  
$$
 By Theorem \ref{the1}, there exists $N_0 \geq 1$ such that $\|x^t - x^{t-1}\| < \rho$ whenever $t \geq N_0$.  Hence, it follows that $\|x^t  - x^*\| < (2 + \gamma L)\rho$ whenever $u^t \in B_\rho$ and $t \geq N_0$.  
Applying the third relation in \eqref{alg}, we also have that whenever $u^t \in B_\rho$ and for $t \geq N_0$, 
$$
\|v^t - v^*\| \leq \|u^t - u^*\| + \|x^t - x^{t-1}\| < 2\rho.
$$
Therefore, we obtain that if $u^t \in B_\rho$ and $t \geq N_0$, then $(u^t, v^t, x^t) \in \mathbf{B}_\rho \subseteq U$.  Now, by the facts that $(u^*, v^*, x^*) $ is a cluster point, that $\mathcal{M}_\gamma(u^t, v^t, x^t) > \theta^*$ for every $t \geq 1$, and that $\lim_{t\to \infty}\mathcal{M}_\gamma(u^t, v^t, x^t) = \theta^*$, we easily see that there exists $(u^N, v^N, x^N)$ with $N \geq N_0$ such that
\begin{itemize}
\item[(i)] $u^N \in B_\rho$ and $\theta^* < \mathcal{M}_\gamma(u^N, v^N, x^N) < \theta^* +\eta$;

\item[(ii)] $\|u^N - u^*\| + \frac{\tau}{A} \varphi (\mathcal{M}_\gamma (u^N, v^N, x^N) - \theta^*) < \rho$. 
\end{itemize}

Next, we prove that whenever $u^t \in B_\rho$ and $\theta^* < \mathcal{M}_\gamma (u^t, v^t, x^t) < \theta^* + \eta$ for some $t \geq N_0$, we have
\begin{equation}\label{A01}
\|u^{t+1} - u^t\| \leq \frac{\tau}{A} \Big[ \varphi \big( \mathcal{M}_{\gamma}(u^t, v^t, x^t) - \theta^* \big)  - \varphi \big( \mathcal{M}_{\gamma}(u^{t+1}, v^{t+1}, x^{t+1}) - \theta^* \big)  \Big].
\end{equation}
Recall that $\{ \mathcal{M}_{\gamma}(u^t, v^t, x^t) \}$ is non-increasing and $\varphi$ is increasing, \eqref{A01} holds obviously if $u^t=u^{t+1}$. Without loss generality, we assume that $u^{t+1} \neq u^t$. Since $u^t \in B_\rho$ and $t \geq N_0$, we have $(u^t, v^t, x^t) \in \mathbf{B}_\rho \subseteq U$. Hence, for $(u^t, v^t, x^t)$, \eqref{KL098} holds. Using \eqref{dis01}, \eqref{AAa}, \eqref{KL098} and the concavity of $\varphi$,  we obtain  that for such $t$, 
$$
\aligned
& \tau \|u^{t+1} - u^t\| \cdot \Big[\varphi \big( \mathcal{M}_\gamma(u^t, v^t, x^t) - \theta^* \big) - \varphi \big( \mathcal{M}_\gamma(u^{t+1}, v^{t+1}, x^{t+1})  - \theta^* \big)  \Big] \\
& \geq \textmd{dist} (0, \partial \mathcal{M}_\gamma(u^t, v^t, x^t)) \cdot \Big[\varphi \big( \mathcal{M}_\gamma(u^t, v^t, x^t) - \theta^* \big) - \varphi \big( \mathcal{M}_\gamma(u^{t+1}, v^{t+1}, x^{t+1})  - \theta^* \big)  \Big] \\
& \geq \textmd{dist} (0, \partial \mathcal{M}_\gamma(u^t, v^t, x^t)) \cdot \varphi^\prime \big(\mathcal{M}_\gamma(u^t, v^t, x^t) - \theta^* \big)  \\
&~~~~ \cdot \Big[ \mathcal{M}_\gamma(u^t, v^t, x^t)  -  \mathcal{M}_\gamma(u^{t+1}, v^{t+1}, x^{t+1}) \Big] \\
& \geq A \|u^{t+1} - u^t\|^2. 
\endaligned
$$
This implies that \eqref{A01} holds immediately. 

We next claim that $u^t \in B_\rho$ for all $t \geq N$.  First, the claim is true whenever $t=N$ by construction. Now, suppose that the claim is true for $t=N, \dots, N+k-1$ for some $k \geq 1$, that is, $u^N, \dots, u^{N+k-1} \in B_\rho$. Note that $\theta^* < \mathcal{M}_\gamma(u^t, v^t, x^t) < \theta^* + \eta$ for all $t \geq N$ by the choice of $N$ and non-increase property of $\{\mathcal{M}_\gamma(u^t, v^t, x^t)\}$. Hence, \eqref{A01} can be used for $t=N, \dots, N+k-1$. Thus, for $t=N+k$, we have 
$$
\aligned
\| u^{N+k} - u^*\| & \leq \|u^N - u^* \| + \sum_{j=1}^k \|u^{N+j} - u^{N+j-1}\| 
\\
&  \leq \|u^N -u^*\| + \frac{\tau}{A}\sum_{j=1}^k \Big[ \varphi \big( \mathcal{M}_{\gamma}(u^{N+j-1}, v^{N+j-1}, x^{N+j-1}) - \theta^* \big) \\
&~~~  - \varphi \big( \mathcal{M}_{\gamma}(u^{N+j}, v^{N+j}, x^{N+j}) - \theta^* \big)  \Big] \\
& \leq \|u^N -u^*\| + \frac{\tau}{A} \varphi \big( \mathcal{M}_{\gamma}(u^{N}, v^{N}, x^{N}) - \theta^* \big) < \rho. 
\endaligned
$$
Hence, $u^{N+k} \in B_\rho$. By induction, we obtain that $u^t \in B_\rho$ for all $t \geq N$. 

Note that  we have shown that  $u^t \in B_\rho$ and $\theta^* < \mathcal{M}_\gamma(u^t, v^t, x^t) < \theta^* + \eta$ for all $t \geq N$. Summing \eqref{A01} from $t=N$ to $M$ and letting $M \to \infty$, we obtain
$$
\sum_{t=N}^\infty \|u^{t+1} - u^t\| \leq \frac{\tau}{A} \varphi \big( \mathcal{M}_{\gamma}(u^N, v^N, x^N) - \theta^* \big)< +\infty.  
$$
This shows that $\{\| u^{t+1} - u^t\|\}$ is summable and hence the whole sequence $\{u^t\}$ converges  to $u^*$. From this and \eqref{opt01} we also obtain the sequence $\{x^t\}$ is convergent. Finally, by the third relation in \eqref{alg}, the convergence of $\{v^t\}$ follows immediately. The proof is completed. 
\end{proof}

\section{ Applications}
\label{sec:examples} 
In this section, we apply the parameterized Douglas-Rachford splitting method \eqref{alg} to solve three important class of nonconvex optimization problems: sparsity constrained least squares problem, feasibility problems and low rank matrix completion, based on our analysis in \cref{sec:convergence}.

\subsection{Sparsity constrained least squares problems}
\label{ex1}
In this subsection, we apply \cref{alg:GDR} to solve the sparsity constrained least squares problem which arises in the area of statistics and machine learning.

Let $A$ be a linear map, $b$ be a vector and $D$ be a nonempty compact set that is not necessarily convex. Then the constrained least squares problem is given by
\begin{equation}\label{lsp}
\min_{u \in D} \frac{1}{2} \| Au - b \|^2.
\end{equation}
As discussed in \cref{sec:convergence}, when the PDR splitting method is applied directly to \eqref{lsp} with $f(u)= \frac{1}{2} \| Au - b \|^2$ and $g=\mathcal{I_{D}}(u)$, the sequence does not converge to the critical point of \eqref{lsp}. However, as an altenative, we can take $f(u) = \frac{1}{2} \| Au - b \|^2$ and $g(u) = \mathcal{I_{D}} - \frac{2 - \alpha}{2\gamma}\| u \|^2$, and  apply the PDR splitting method accordingly, then the sequence generated  converges  to a critical point of \eqref{lsp}. Thus we can give the following algorithm:
 \begin{equation}\label{lsp-alg}
\begin{cases}
u^{t+1} = \arg\min_{U} \left\{ \frac{1}{2}\| A u - b \|^{2} + \frac{1}{2\gamma}\|u - x^{t}\|^{2} \right\},\\
\\

v^{t+1} = \arg\min_{v} \left\{ \mathcal{I}_{D}(v) - \frac{2 - \alpha}{2\gamma}\| v \|^2 + \frac{1}{2\gamma}\|\alpha u^{t+1} - x^{t} - v\|^{2} \right\},\\
\\

x^{t+1} = x^{t} + (v^{t+1} - u^{t+1}).
\end{cases}
\end{equation}
We can easily solve the two subproblems in \eqref{lsp-alg} to obtain the following algorithm:
 \begin{equation}\label{lsp-alg1}
(PDR)~\begin{cases}
u^{t+1} = \left( \gamma A^TA + I \right)^{-1}(\gamma A^Tb + x^t),\\
\\

v^{t+1} = \mathcal{P}_{D} \left(  \frac{\alpha u^{t+1} - x^t}{\alpha - 1}\right),\\
\\

x^{t+1} = x^{t} + (v^{t+1} - u^{t+1}),
\end{cases}
\end{equation}
where $\mathcal{P}_{D}$ is the projector onto the set $D$. We now check the assumptions on $f$ and $g$ in convergence theory of algorithm \eqref{lsp-alg} in  \cref{sec:convergence}:
\begin{itemize}
\item[1.] For the function $f = \frac{1}{2} \| Au - b\|^2$ in the model \eqref{3-model}, we can easily calculate that it has a Lipschitz continuous gradient, and the Lipschitz constant is $\lambda_{max} (A^TA)$. This verifies $(a1)$ in \cref{ass1};
\item[2.] For the function $g = \mathcal{I}_{D}(u) - \frac{2 - \alpha}{2\gamma}\| u \|^2$, clearly the proximal mapping of $g$ is well-defined for all $u$, so the \cref{ass1} $(a2)$ is satisfied; 
\item[3.] In this experiment, we can take $L = \lambda_{max} (A^TA)$ and $l =0$ in \eqref{gammcond}, from this we easily know that $\gamma$ satisfies \eqref{gammcond} when $0< \gamma < \gamma_{0} = (\sqrt{\frac{1 + \alpha}{4 - \alpha}} - 1)*\frac{1}{\lambda_{max}(A^TA)}$. 
\end{itemize}
Next, we compare our algorithm against the DR splitting method in \cite{LP} and the PR spliting method in \cite{LLP}. Notice that, when $\alpha = 2$, the algorithm \eqref{lsp-alg} is just the classical DR splitting method. In \cite{LLP}, the authors used the PR splitting method to solve \eqref{lsp}. Since they needed that the $f$ is a strongly convex function in the convergence analysis of the PR splitting, they set the function $f = \frac{1}{2} \| Au - b \|^2 + \frac{\beta\lambda_{max}(A^TA)}{2} \| u \|^2$ and the function $g = \mathcal{I}_{D}(u) - \frac{\beta\lambda_{max}(A^TA)}{2} \| u \|^2$. Thus the algorithm in \cite{LLP} is as follows:
\begin{equation}\label{lsp-pr}
(PR)~\begin{cases}
u^{t+1} = \left[ (\beta\gamma \lambda_{max}(A^TA) + 1)I + \gamma A^TA \right]^{-1} (\gamma A^Tb + x^t),\\
\\

v^{t+1} = \mathcal{P}_{D} \left(  \frac{2 u^{t+1} - x^t}{1 - \beta \lambda_{max}(A^TA) \gamma}\right),\\
\\

x^{t+1} = x^{t} + 2(v^{t+1} - u^{t+1}).
\end{cases}
\end{equation}

{\bf Set parameters.} In all algorithms (PDR, PR, DR), we choose the same initialization and stop criteria as in \cite{LP, LLP}. All algorithms are initialized at the origin, and the stop criteria is chosen as follows:
\begin{equation}\label{lsp-stop}
\frac{\max \{ \| x^t - x^{t-1} \|, \| y^t - y^{t-1} \|, \| z^t - z^{t-1} \| \}}{\max \{ \|x^{t-1} \|, \| y^{t-1} \|, \| z^{t-1} \|, 1\}} < 10^{-8}.
\end{equation}
For the choice of $\gamma$, as we analyzed before, we should choose $\gamma < \gamma_0$. However, the $\gamma_0$ might be very small in practical computation. Here, we follow a technique used in \cite{LP} and adopt a heuristic for both DR and PDR splitting: \\

{\it We initialize $\gamma = k * \gamma_0$ and update $\gamma$ as $\max \{ \frac{\gamma}{2}, 0.9999\cdot \gamma_0 \}$ whenever $\gamma > \gamma_0$, and the sequence satisfies either $\| y^t - y^{t-1} \| > 1000/t$ or $\| y \|_{\infty} > 1e10$.}\\

\noindent Following a similar discussion see Remark 4 in \cite{LZ}, one can show that this heuristic leads to a bounded sequence which clusters at a stationary point of \eqref{lsp}. Here we choose $k = 50$ for all algorithms and $\alpha = 1.9, 1.8, 1.7$ for PDR splitting. We set the parameters in PR splitting as in \cite{LLP}: set $\beta = 2.2$ and start with $\gamma = 0.93/(\beta\lambda_{max}(A^TA))$.We then update $\gamma$ as $\max\{\gamma/2, 0.9999 \cdot \gamma_1\}$ whenever $\gamma > \gamma_1 := \frac{\beta - 2}{(\beta + 1)^2 \lambda_{max}(A^TA)}$ and the sequence satisfies either $\| y^t - y^{t-1} \| > \frac{1e3}{t}$ or $\| y \|_{\infty} > 1e10$.\\

\begin{table}[htbp] 
\resizebox{\textwidth}{!}{
 \begin{tabular}{cc|cc|cc|cc|cc|cc} 
  \toprule 
  \multicolumn{2}{l}{Data} & \multicolumn{2}{l}{DR} & \multicolumn{2}{l}{PR} & \multicolumn{2}{l}{1.9-DR} & \multicolumn{2}{l}{1.8-DR} & \multicolumn{2}{l}{1.7-DR}\\
\cmidrule(r){1-2}   \cmidrule(r){3-4}  \cmidrule(r){5-6}  \cmidrule(r){7-8} \cmidrule(r){9-10} \cmidrule(r){11-12} 
 m & n & iter & fval & iter & fval & iter & fval & iter & fval  & iter & fval\\ 
 \midrule 
 \specialrule{0em}{1pt}{1pt}
 100 &  4000 &  274 & 7.73e-02   &  324 & 3.17e-01  & 230 & 1.19e-01 &   204 & 2.03e-01 & 279 & 3.73e-01\\ 
 \specialrule{0em}{1pt}{1pt}
 100 &  5000  &    291 & 2.06e-01&    370 & 4.95e-01   &    250 & 3.28e-01  &  213 & 4.52e-01   &    338 & 5.08e-01\\
 \specialrule{0em}{1pt}{1pt}
 100 &  6000 &    301 & 2.53e-01 &    436 & 4.76e-01  &    258 & 3.13e-01 & 254 & 4.54e-01  &    423 & 5.12e-01\\
 \specialrule{0em}{1pt}{1pt}
 \cline{1-12}
 \specialrule{0em}{1pt}{1pt}
200 &  4000 &    217 &  9.20e-03   &    185 & 7.59e-02  &    187 & 1.42e-02  &    160 & 1.39e-02 & 155 & 9.32e-02\\
\specialrule{0em}{1pt}{1pt}
200 &  5000  &    234 & 9.10e-03 &    224 & 2.06e-01 &    200 & 9.02e-03  &  173 & 6.91e-02   &    184 & 1.65e-01\\
\specialrule{0em}{1pt}{1pt}
200 &  6000 &    250 & 8.94e-03  &    281 & 1.77e-01  &    217 & 3.41e-02 & 196 & 9.27e-02 &    235 & 2.42e-01\\
\specialrule{0em}{1pt}{1pt}
 \cline{1-12}
 \specialrule{0em}{1pt}{1pt}
 300 &  4000 &    184 & 1.31e-02   &    123 & 1.39e-02  &    158 & 1.32e-02  &    132 & 1.33e-02  & 118 & 1.30e-02\\
 \specialrule{0em}{1pt}{1pt}
 300 &  5000 &    201 & 1.35e-02 &    150 & 1.42e-02 &    171 & 1.36e-02  &    144 & 1.39e-02  & 133 & 1.49e-02\\
 \specialrule{0em}{1pt}{1pt}
 300 &  6000  &    215 & 1.32e-02 &    187 & 1.44e-02  &    183 & 1.33e-02  &    156 & 1.35e-02  &  156 & 1.44e-02\\
 \specialrule{0em}{1pt}{1pt}
 \cline{1-12}
 \specialrule{0em}{1pt}{1pt}
 400 &  4000  &    166 & 1.75e-02  &  91 & 1.79e-02  &    141 & 1.76e-02  &    118 & 1.77e-02  &     99 & 1.74e-02\\
 \specialrule{0em}{1pt}{1pt}
 400 &  5000  &    179 & 1.75e-02  &    115 & 1.83e-02  &    153 & 1.75e-02  &    129 & 1.77e-02  &  112 & 1.74e-02\\
 \specialrule{0em}{1pt}{1pt}
 400 &  6000  &    194 & 1.77e-02 &    140 & 1.85e-02   &    165 & 1.78e-02 &    138 & 1.80e-02  &  126 & 1.82e-02\\
 \specialrule{0em}{1pt}{1pt}
  \cline{1-12}
  \specialrule{0em}{1pt}{1pt}
 500 &  4000 &    160 & 2.23e-02 &   75 & 2.27e-02  &    131 & 2.24e-02  &    108 & 2.25e-02  &     89 & 2.27e-02\\
 \specialrule{0em}{1pt}{1pt}
 500 &  5000 &    166 & 2.17e-02   & 92 & 2.22e-02  &    140 & 2.17e-02  &    118 & 2.18e-02 &     98 & 2.20e-02\\
 \specialrule{0em}{1pt}{1pt}
 500 &  6000  &    178 &  2.22e-02 &    112 & 2.27e-02  &    151 & 2.22e-02  &    127 & 2.24e-02  & 110 & 2.25e-02\\
 \specialrule{0em}{1pt}{1pt}
  \bottomrule 
 \end{tabular} 
 }
 \caption{\label{tab:lsp}Comparing DR, PR and $\alpha$-DR splitting method for constrained least squares problem on random instances} 
\end{table}

{\bf Simulated data.}  We randomly generate $m \times n$ matrix $A$, a random sparsity vector $\tilde{x} \in \mathbb{R}^n$ with $r = \lceil \frac{m}{10} \rceil$ non-zero entries and a noise vector $\epsilon \in \mathbb{R}^m$. Then we get the vector $b \in \mathbb{R}^m$ by $b = A\tilde{x} + noiselevel*\epsilon$, where we take the $noiselevel = 0.01$. We set $D = \{ x \in \mathbb{R}^n : \| x \|_0 \leq r, \| x \|_{\infty} \leq 10^6 \}$. We take $m \in \{100, 200, 300, 400, 500 \}$ and $n \in \{4000, 5000, 6000\}$, respectively. The computational results are displayed in \cref{tab:lsp}. All of these quantities are averaged over 50 runs. In \cref{tab:lsp}, we report the number of iterations (iter) and the final value of the objective function (fval).  We can see that the PDR splitting method is always faster than the DR splitting method and the final function value is comparable with DR splitting. For example, when $m= 300,400$ and $500$, PDR splitting with $\alpha = 1.7$ is faster than DR splitting while the final function value is similar to DR method. Compared with PR splitting, when $m$ is relatively small, such as $m = 100$ or $200$, PDR splitting with $\alpha = 1.9$ or $1.8$ is faster than PR method and the final function value is also smaller than PR's. When $m = \in \{300, 400, 500 \}$, we see that PR method is faster than PDR and their final function value are similar. \\

\subsection{Nonconvex feasibility problem}
Another important class of optimization problems is the feasibility problem, such as see \cite{ABS,BB,BB1,HL}, which search a point in the intersection of two nonempty sets. Let $C$ be an nonempty closed convex set and $D$ be an nonempty compact set. Then the feasibility problem is to find a point in $ C \cap D$. It is well known that this problem can be modeled by the following optimization: 
\begin{equation}\label{fsp}
\begin{aligned}
&\min_{u} ~~~~\frac{1}{2} d_{C}^2 (u)\\
& s.t.~~~~~~u \in D,
\end{aligned}
\end{equation}
where $d_{C}^2 (u) := \inf_{y \in C} \| y - u \|^2$ is the distance function. Obviously, we can see that when $C \cap D \neq \emptyset$, the optimization problem \eqref{fsp} has a zero optimal value.\\

Similar to \cref{ex1}, we take $f(u) = \frac{1}{2} d_{C}^2 (u)$ and $g(u) = \mathcal{I}_{D}(u) - \frac{2 - \alpha}{2\gamma}\| u \|^2$ in this experiment. Then applying the PDR splitting to solving problem \eqref{fsp} gives the following algorithm:
 \begin{equation}\label{fsp-alg}
\begin{cases}
u^{t+1} = \arg\min_{u} \left\{ \frac{1}{2} d_{C}^{2}(u) + \frac{1}{2\gamma}\|u - x^{t}\|^{2} \right\},\\
\\

v^{t+1} = \arg\min_{v} \left\{ \mathcal{I}_{D}(v) - \frac{2 - \alpha}{2\gamma}\| v \|^2 + \frac{1}{2\gamma}\|\alpha u^{t+1} - x^{t} - v\|^{2} \right\},\\
\\

x^{t+1} = x^{t} + (v^{t+1} - u^{t+1}).
\end{cases}
\end{equation}
Furthermore, we solve the two subproblems of the above algorithm and get
 \begin{equation}\label{fsp-alg1}
(PDR)~\begin{cases}
u^{t+1} = \frac{1}{1 + \gamma} (x^t + \gamma \mathcal{P}_{C}(x^t)),\\
\\

v^{t+1} = \mathcal{P}_{D} \left(  \frac{\alpha u^{t+1} - x^t}{\alpha - 1}\right),\\
\\

x^{t+1} = x^{t} + (v^{t+1} - u^{t+1}).
\end{cases}
\end{equation}
Next we check the assumptions on $f$ and $g$ in convergence theory of the algorithm \eqref{fsp-alg} in  \cref{sec:convergence}:
\begin{itemize}
\item[1.] Since $C$ is a closed and convex, we know that $f = \inf_{y \in C} \| y - u \|^2$ is smooth with a Lipschitz continuous gradient whose Lipschitz continuity modulus $L$ is 1; see, for example, Corollary 12.30 in \cite{BC}. This verifies the \cref{ass1} $(a1)$;
\item[2.] For the function $g = \mathcal{I}_{D}(u) - \frac{2 - \alpha}{2\gamma}\| u \|^2$, the proximal of $g$ exists, hence the \cref{ass1} $(a2)$ is satisfied;
\item[3.] In this experiment, we can take $L=1$ and $l=0$ in \eqref{gammcond}, therefore we easily compute that the $\gamma$ satisfies  \eqref{gammcond} when $0< \gamma < \gamma_{0} = \sqrt{\frac{1 + \alpha}{4 - \alpha}} - 1$.\\
\end{itemize}

We now compare the PDR splitting with the DR, PR splitting and alternating projection methods. Note that when $\alpha = 2$, the \eqref{fsp-alg} is the DR splitting method to solving the feasibility problem. In order to ensure the convergence, in \cite{LLP} the authors set $f (u) = \inf_{y \in C} \| y - u \|^2 + \frac{\beta}{2} \| u \|^2$ and $g(u) = \mathcal{I}_{D} (u) - \frac{\beta}{2} \| u \|^2$ to solve the problem \eqref{fsp} by the PR splitting method, thus their algorithm can be given as follows:
\begin{equation}\label{fsp-pr}
(PR)~\begin{cases}
u^{t+1} = \frac{\gamma \mathcal{P}_{C} \left( \frac{x^t}{1 + \beta\gamma}\right) + x^t}{(1+\beta)\gamma + 1},\\
\\
v^{t+1} = \mathcal{P}_{D} \left( \frac{2u^{t+1} - x^t}{1 - \beta \gamma} \right),\\
\\
x^{t+1} = x^t + 2 (z^{t+1} - u^{t+1}).
\end{cases}
\end{equation}
The alternating projection method to solve problem \eqref{fsp} is described as follows: given $x^0$, updating
\begin{equation}\label{fsp-alt}
(ALT)~~x^{t+1} \in {\arg\min}_{\{\| x \|_0 \leq r, \| x \|_{\infty} \leq 10^6\}} \big\{ \| x - \left( x^t + A^{\dagger}(b - Ax^t) \right) \| \big\}.\\
\end{equation}

{\bf Simulation data.} 
We consider the problem of finding an $r$-sparse solution of a randomly generated linear system $Ax = b$. In detail, we set $C = \{ x \in \mathbb{R}^n : Ax = b \}$ and $D = \{ x \in \mathbb{R}^n : \| x \|_0 \leq r, \| x \|_{\infty} \leq 10^6 \}$. We randomly generate an $m \times n$ matrix $A$ with i.i.d. standard Gaussian entries. We generate a sparse vector $\tilde{x} \in \mathbb{R}^n$ with $r = \lceil \frac{m}{5} \rceil$ non-zero entries randomly. We then project $\tilde{x}$ onto $[10^{-6}, 10^6]^n$, hence $\tilde{x} \in D$. Finally, set $b = A\tilde{x}$. Then $C \cap D$ is nonempty because it contains at least $\tilde{x}$. This implies that the globally optimal value of the objective function is zero.\\

{\bf Set parameters.} In all algorithms, we still choose the same stop criteria as same as \eqref{lsp-stop}. All algorithms are initialized at the origin. For the choice of $\gamma$, as we analyzed before, we should choose $\gamma < \gamma_0$. However, the $\gamma_0$ might be very small in practical computation. Here, we follow a technique used in \cite{LP} and adopt a heuristic for both DR and PDR splitting: \\

{\it We initialize $\gamma = k * \gamma_0$ and update $\gamma$ as $\max \{ \frac{\gamma}{2}, 0.9999\cdot \gamma_0 \}$ whenever $\gamma > \gamma_0$, and the sequence satisfies either $\| y^t - y^{t-1} \| > 1000/t$ or $\| y \|_{\infty} > 1e10$.}\\

Here we choose $k = 150$ for all algorithms and $\alpha = 1.7$ for PDR splitting. We set the parameters in PR splitting as in \cite{LLP}: set $\beta = 2.2$ and start with $\gamma = 0.93/\beta$.We then update $\gamma$ as $\max\{\gamma/2, 0.9999 \cdot \gamma_1\}$ whenever $\gamma > \gamma_1 := \frac{\beta - 2}{(\beta + 1)^2}$ and the sequence satisfies either $\| y^t - y^{t-1} \| > \frac{1e3}{t}$ or $\| y \|_{\infty} > 1e10$.\\

We generate 50 random instances for each pair of $(m,n)$, where $m \in \{ 100, 200, $ $300, 400, 500 \}$ and $n \in \{ 4000, 5000, 6000 \}.$  We report our computational results in \cref{tab:fsp1} and \cref{tab:fsp2} , where all results averaged over the 50 instances. We present the runtime (time), number of iteration (iter), the largest and smallest function values at termination ($fval_{max},fval_{min}$), and also the number of successes (succ) and failures (fail) in identifying a sparse solution of the linear system. Here we declare a success if the function value at termination is below $10^{-12}$ and a failure if the value at termination is above $10^{-12}$. We also give the average runtime ($tim_{s}$) and the number of iterations ($iter_{s}$) for successful instances.

\begin{table}[htbp] 
\resizebox{\textwidth}{!}{
 \begin{tabular}{cc|cccc|cccc|cccc|cccc} 
  \toprule 
  \multicolumn{2}{l}{Data} & \multicolumn{4}{l}{DR} & \multicolumn{4}{l}{PR} & \multicolumn{4}{l}{Alt-Pro} & \multicolumn{4}{l}{1.7-$DR_{150}$}\\
\cmidrule(r){1-2}   \cmidrule(r){3-6}  \cmidrule(r){7-10}  \cmidrule(r){11-14} \cmidrule(r){15-18} 
 m & n & tim(s) & iter & $tim_{s}$ & $iter_{s}$ & tim(s) & iter & $tim_{s}$ & $iter_{s}$ & tim(s) & iter & $tim_{s}$ & $iter_{s}$ & tim(s) & iter & $tim_{s}$ & $iter_{s}$\\ 
 \midrule 
 \specialrule{0em}{1pt}{1pt}
 100 &  4000 & 1.02 &   1967 &  0.87  & 1544 & 0.16 &  307  & - & - &    0.78 &   1694 &- &- &0.19 &    372 & 0.21 &444\\ 
 \specialrule{0em}{1pt}{1pt}
 100 &  5000 &    1.69 &   2599 &  1.32 &   1685 &   0.25 &    381  & - & -  & 1.16 &   1978 & - & - & 0.24 &    373 & - & -   \\
 \specialrule{0em}{1pt}{1pt}
 100 &  6000 &   1.58 &   2046  & 0.97  & 1257 &    0.34 &    429  & 0.47  &   646  &  1.68 &   2350 & - & - &  0.29 &    380 &  0.33 & 429\\
 \specialrule{0em}{1pt}{1pt}
 \cline{1-18}
 \specialrule{0em}{1pt}{1pt}
200 &  4000 &  0.80 &    836  & 0.80    &  836 & 0.20 &    199   & 0.23  & 232  &1.00 &   1076 & - & -
&  0.38 &    384 & 0.38 & 385\\
\specialrule{0em}{1pt}{1pt}
200 &  5000 & 1.13 &   1080 & 1.13 &1080 & 0.25 &    232 & 0.30 &  280 & 1.23 &   1223&-&- &0.39 &    371&  0.43 &397 \\
\specialrule{0em}{1pt}{1pt}
200 &  6000 & 1.60 &   1279 & 1.49  & 1197 &    0.34 &    262 & 0.43 & 334  & 1.84 &   1510 & 2.97  & 2675 &  0.47 &    376 &  0.50 &  408\\
\specialrule{0em}{1pt}{1pt}
 \cline{1-18}
 \specialrule{0em}{1pt}{1pt}
 300 &  4000 &  0.92 &    600 & 0.92   &600 & 0.23 &    139 & 0.25 & 149 & 1.32 &    872 &1.92 & 1282    & 0.54 &    343&    0.54 & 343 \\
 \specialrule{0em}{1pt}{1pt}
 300 &  5000 &   1.32 &    710 & 1.32 & 710 &  0.34 &    170 & 0.38 & 189 &1.99 &   1068 &  3.18 &    1548 & 0.77 &    402 &   0.76 & 389\\
 \specialrule{0em}{1pt}{1pt}
 300 &  6000 &  1.63 &    812 & 1.63 & 812 &  0.43 &    212  &  0.51 &246 & 2.44 &   1252 &2.88 &    1554 & 0.78 &    383 &    0.77 & 385\\
 \specialrule{0em}{1pt}{1pt}
 \cline{1-18}
 \specialrule{0em}{1pt}{1pt}
 400 &  4000 &    1.17 &    520 &   1.17 &    520 &  0.22 &     93 & 0.22 &     93  & 1.80 &    818 &    1.87  & 854 &  0.66 &    287 &  0.66 &    287\\
 \specialrule{0em}{1pt}{1pt}
 400 &  5000 &  1.48 &    579 & 1.48 &    579 &  0.33 &    125 & 0.34 & 128 & 2.43 &    946&2.91&    1176 & 0.86 &    328 &  0.86 &    328 \\
 \specialrule{0em}{1pt}{1pt}
 400 &  6000 &    1.87 &    646 & 1.87 &    646 &0.49 &    156 &     0.42 &167 &  3.13 &   1108 &     4.30  & 1530 & 1.11 &    374& 1.08  & 361 \\
 \specialrule{0em}{1pt}{1pt}
  \cline{1-18}
  \specialrule{0em}{1pt}{1pt}
 500 &  4000 & 1.59 &    499 &  1.59 &    499 & 1.22 &    381 & 0.21  & 64 & 1.97 &    640 &  1.96 &    636 & 0.79 &    258 & 0.79 &    258 \\
 \specialrule{0em}{1pt}{1pt}
 500 &  5000 &1.66 &    519 &  1.66 &    519 &0.31 &     91  &0.31 &     91 &2.68 &    846 &  2.79&     882 &   0.92 &    285 & 0.92 &    285\\
 \specialrule{0em}{1pt}{1pt}
 500 &  6000 &  1.97 &    556 & 1.97 &    556 &0.45 &    122 & 0.44 &   120& 3.72 &   1071 &4.14 &    1192 & 1.11 &    311 & 1.11 &    311\\
 \specialrule{0em}{1pt}{1pt}
  \bottomrule 
 \end{tabular} 
 }
 \caption{\label{tab:fsp1}Comparing DR, PR, Alt and PDR splitting method for feasibility problem on random instances} 
\end{table}

\begin{table}[htbp] 
\resizebox{\textwidth}{!}{
 \begin{tabular}{cc|cccc|cccc|cccc|cccc} 
  \toprule 
  \multicolumn{2}{l}{Data} & \multicolumn{4}{l}{DR} & \multicolumn{4}{l}{PR} & \multicolumn{4}{l}{Alt-Pro} & \multicolumn{4}{l}{1.7-$DR_{150}$}\\
\cmidrule(r){1-2}   \cmidrule(r){3-6}  \cmidrule(r){7-10}  \cmidrule(r){11-14} \cmidrule(r){15-18} 
 m & n & $f_{max}$ & $f_{min}$ & succ & fail & $f_{max}$ & $f_{min}$ & succ & fail & $f_{max}$ & $f_{min}$ & succ & fail & $f_{max}$ & $f_{min}$ & succ & fail\\ 
 \midrule 
 \specialrule{0em}{1pt}{1pt}
 100 &  4000 & 3e-02 & 6e-17 & 30 & 20 &  6e-02 & 1e-03 &  0 & 50 & 8e-02 & 4e-03 &  0 & 50  & 3e-02 & 2e-16 &  1 & 49 \\ 
 \specialrule{0em}{1pt}{1pt}
 100 &  5000 & 2e-02 & 2e-16 & 18 & 32 &5e-02 & 3e-04 &  0 & 50 &  7e-02 & 5e-03 &  0 & 50 &     4e-02 & 5e-03 &  0 & 50\\
 \specialrule{0em}{1pt}{1pt}
 100 &  6000 & 1e-02 & 1e-16 & 12 & 38 & 5e-02 & 4e-15 &  1 & 49 &5e-02 & 4e-05 &  0 & 50 &     2e-02 & 7e-16 &  1 & 49\\
 \specialrule{0em}{1pt}{1pt}
 \cline{1-18}
 \specialrule{0em}{1pt}{1pt}
200 &  4000 &  2e-15 & 2e-16 & 50 &  0 &  2e-01 & 1e-15 & 20 & 30 &3e-01 & 3e-05 &  0 & 50  &   9e-02 & 4e-16 & 25 & 25\\
\specialrule{0em}{1pt}{1pt}
200 &  5000 &  3e-15 & 2e-16 & 50 &  0  &1e-01 & 1e-15 &  7 & 43 & 2e-01 & 2e-03 &  0 & 50&     1e-01 & 3e-16 &  5 & 45 \\
\specialrule{0em}{1pt}{1pt}
200 &  6000 & 7e-02 & 1e-16 & 43 &  7 & 1e-01 & 3e-15 &  6 & 44 & 2e-01 & 1e-13 &  1 & 49 & 8e-02 & 2e-16 &  4 & 46\\
\specialrule{0em}{1pt}{1pt}
 \cline{1-18}
 \specialrule{0em}{1pt}{1pt}
 300 &  4000 &  3e-15 & 2e-16 & 50 &  0  & 2e-01 & 1e-15 & 32 & 18 & 4e-01 & 6e-14 &  3 & 46  &     2e-15 & 4e-16 & 50 &  0\\ 
 \specialrule{0em}{1pt}{1pt}
 300 &  5000 &    4e-15 & 4e-16 & 50 &  0 & 3e-01 & 1e-15 & 26 & 24 & 3e-01 & 9e-14 &  3 & 45 & 1e-01 & 4e-16 & 40 & 10 \\
 \specialrule{0em}{1pt}{1pt}
 300 &  6000 &  3e-15 & 2e-16 & 50 &  0 &  2e-01 & 2e-15 & 25 & 25 &3e-01 & 1e-13 &  1 & 49 &     1e-01 & 3e-16 & 28 & 22\\
 \specialrule{0em}{1pt}{1pt}
 \cline{1-18}
 \specialrule{0em}{1pt}{1pt}
 400 &  4000 &    2e-15 & 3e-17 & 50 &  0 & 3e-01 & 8e-16 & 49 &  1 &6e-01 & 7e-14 & 30 & 19&     4e-15 & 6e-16 & 50 &  0\\
 \specialrule{0em}{1pt}{1pt}
 400 &  5000 & 3e-15 & 5e-16 & 50 &  0 &3e-01 & 1e-15 & 39 & 11 &4e-01 & 9e-14 & 12 & 36 &     4e-15 & 6e-16 & 50 &  0\\
 \specialrule{0em}{1pt}{1pt}
 400 &  6000 &  4e-15 & 6e-16 & 50 &  0 & 2e-01 & 2e-15 & 33 & 17 & 3e-01 & 1e-13 &  4 & 44 &7e-02 & 8e-16 & 48 &  2  \\
 \specialrule{0em}{1pt}{1pt}
  \cline{1-18}
  \specialrule{0em}{1pt}{1pt}
 500 &  4000 &   1e-16 & 1e-18 & 50 &  0 &3e-01 & 4e-16 & 43 &  7  &4e-01 & 6e-14 & 38 & 10 &     5e-15 & 8e-16 & 50 &  0\\
 \specialrule{0em}{1pt}{1pt}
 500 &  5000 &1e-15 & 4e-17 & 50 &  0 & 2e-15 & 9e-16 & 50 &  0&4e-01 & 8e-14 & 37 & 13 & 4e-15 & 3e-16 & 50 &  0\\
 \specialrule{0em}{1pt}{1pt}
 500 &  6000 &  3e-15 & 3e-16 & 50 &  0 &    2e-01 & 1e-15 & 44 &  6 & 5e-01 & 1e-13 & 22 & 28& 4e-15 & 7e-16 & 50 &  0\\
 \specialrule{0em}{1pt}{1pt}
  \bottomrule 
 \end{tabular} 
 }
 \caption{\label{tab:fsp2}Comparing DR, PR, Alt and PDR splitting method for feasibility problem on random instances} 
\end{table}

From \cref{tab:fsp1} and \cref{tab:fsp2}, we can see that both DR and PDR generally achieve higher success rate compared to Alternating projection method and PR, while PDR is faster than DR for some cases. For example, when $m \in \{400, 500\}$, the success rate of PDR splitting and DR splitting is almost the same, but PDR splitting is much faster than DR method. Although PR splitting is faster than DR and PDR splitting for $m \in \{300, 400, 500\}$, the function value and success rate of PR splitting are worse than DR and PDR methods. Meanwhile, the alternating projection method is not comparable to any of them. We also notice that when $m$ is small, such as $m = 100$ or $200$,  the success rate of the PR and PDR methods  are very similar,  the final function value of PDR is less than that of PR splitting although PR method is faster than PDR method.

\subsection{Low rank matrix completion}
There is a rapidly growing interest in the recovery of an unknown low-rank or approximately low-rank matrix from very limited information. In the following we employ the parameterized  Douglas-Rachford splitting method to resolve this problem and compare our results to the previous ones. \\

{\bf Simulation data.}   We generate $n \times n$ matrices of rank $r$ by sampling two $n \times r$ factors $M_{L}$ and $M_{R}$ independently, each having i.i.d. Gaussian entries, and setting $M = M_{L}M_{R}^{*}$ as suggested in \cite{CR}. The set of observed entries $\Omega$ is sampled uniformly at random among all sets of cardinality $m$. The sampling ratio is defined as $p:=\frac{m}{n^2}$ . We wish to recover a matrix with lowest rank such that its entries are equal to those of $M$ on $\Omega$.  For this purpose, we consider the following optimization problem 
\begin{equation}\label{ex3-opt} 
{\arg\min}_{X} \left\{ \frac{1}{2}\|\mathcal{P}_{\Omega}(X) - \mathcal{P}_{\Omega}(M)\|^{2}  + I_{C(r)}(X) \right\},
\end{equation}
where $C(r) := \{X ~ | ~ \textmd{rank}(X) \leq r\}$, $I_{C(r)}(\cdot)$  denotes the indicator function  of $C(r)$ and $\mathcal{P}_{\Omega}$ is the orthogonal projector onto the span of matrices vanishing outside of $\Omega$ so that the $(i, j)$th componet of $\mathcal{P}_{\Omega}(X)$ is equal to $X_{ij}$ if $(i, j) \in \Omega$ and zero otherwise.  For some important nonconvex optimization problems, such as see \cite{ZH,LZT}, the regularization term has been shown to be very efficient. Hence, here we study problem \eqref{ex3-opt} via adopting two different choices about functions $f$ and $g$, one of which will produce an additional regularization term. \\ 

\noindent {\bf Method 1:  Take $f(X) = \frac{1}{2}\|\mathcal{P}_{\Omega}(X) - \mathcal{P}_{\Omega}(M)\|^{2}$ and $g(X) = I_{C(r)}(X) - \frac{2-\alpha}{2\gamma} \| X \|^2$.} Then applying the PDR splitting to solving problem \eqref{ex3-opt} gives the following algorithm: 
\begin{equation}\label{mcp-alg1}
\begin{cases}
U^{t+1} = \arg\min_{U} \left\{ \frac{1}{2}\|\mathcal{P}_{\Omega}(U) - \mathcal{P}_{\Omega}(M)\|^{2} + \frac{1}{2\gamma}\|U-X^{t}\|^{2} \right\},\\
\\

V^{t+1} = \arg\min_{V} \left\{ I_{C(r)}(V) -\frac{2 - \alpha}{2\gamma} \| V \|^2 + \frac{1}{2\gamma}\|\alpha U^{t+1} - X^{t} - V\|^{2} \right\},\\
\\

X^{t+1} = X^{t} + (V^{t+1} - U^{t+1}).
\end{cases}
\end{equation}
Clearly, when $\alpha = 2$, this algorithm is just the classical DR splitting method. By solving the subproblems of the above algorithm, we have 
\begin{equation} 
(PDR~)\begin{cases}
 U^{t+1}=
\begin{cases}
 \frac{1}{1+\gamma}\left( X_{i, j}^{t} + \gamma M_{i, j}\right),  ~~~ & (i, j) \in \Omega, \\
 X_{i, j}^{t},  ~~~~ & (i, j)\notin \Omega, \\
\end{cases}
\\
\\

V^{t+1} = P_{C(r)} \left( \frac{\alpha U^{t+1} - V^{t}}{\alpha - 1} \right),\\

\\
X^{t+1} = X^{t} + (V^{t+1} - U^{t+1}).
\end{cases}
\end{equation} 
We also verify the assumptions on $f$ and $g$ in convergence theory of the algorithm \eqref{mcp-alg1} in \cref{sec:convergence}:
\begin{itemize}
\item[1.] Since $\mathcal{P}_{\Omega}$ is the orthogonal projection, we can easily know that $f(X) = \frac{1}{2}\|\mathcal{P}_{\Omega}(X) - \mathcal{P}_{\Omega}(M)\|^{2}$ is smooth with a Lipschitz continuous gradient whose Lipschitz continuity modulus $L$ is 1.Therefore, this verifies the \cref{ass1} $(a1)$;
\item[2.] For the function $g(X) = I_{C(r)}(X) - \frac{2-\alpha}{2\gamma} \| X \|^2$, the proximal of $g$ exists, hence the \cref{ass1} $(a2)$ is satisfied;
\item[3.] In this experiment, we can take $L=1$ and $l=0$ in \eqref{gammcond}, therefore we easily compute that the $\gamma$ satisfies  \eqref{gammcond} when $0< \gamma < \gamma_{0} = \sqrt{\frac{1 + \alpha}{4 - \alpha}} - 1$.
\end{itemize}
From the above analysis and the convergence theory in \cref{sec:convergence}, we know that the sequence generated by \eqref{mcp-alg1} converges to the critical point of \eqref{ex3-opt}. Next, we compare our algorithm against the DR splitting method in \cite{LP} and SVP method in \cite{JMD}. We recall the SVP \cite{JMD} algorithm to solve problem \eqref{ex3-opt}:
\begin{equation} 
(SVP) ~ \begin{cases}
Y^{t+1} = X^t - \eta_t \mathcal{P}_\Omega^T (\mathcal{P}_\Omega(X^t) - b),~~~~~~~~~~~~~~~~~~~~~~~\\
X^{t+1} = P_{C(r)}( Y^{t+1} ).
\end{cases}
\end{equation} 
{\bf Set parameters.} All algorithms (PDR, DR and SVP) are initialized at the origin and terminated when:
\begin{equation}\label{mcp-stop}
\frac{\| \mathcal{P}_{\Omega} ( X^{t} - M) \|_{F}}{\| \mathcal{P}_{\Omega} ( M ) \|_{F}} < 10^{-4},
\end{equation}
where $\|\cdot\|_F$ represents  the Frobenius norm. We compute relative error for two algorithms by the following way:
\begin{equation}\label{mcp-error}
\textmd{relative} ~ \textmd{error} = \frac{\| X^{\textmd{opt}} - M \|_{F}}{\| M \|_{F}}.  
\end{equation} 
As before, we also adopt a heuristic for both algorithms to update $\gamma$: \\

{\it We initialize $\gamma = k * \gamma_0$ and update $\gamma$ as $\max \{ \frac{\gamma}{2}, 0.9999\cdot \gamma_0 \}$ whenever $\gamma > \gamma_0$, and the sequence satisfies either $\| U^t - U^{t-1} \|_{F}/n > 1000/t$ or $\| U \|_{\infty} > 1e10$.}\\

\noindent Following a similar discussion see Remark 4 in \cite{LZ}, one can show that this heuristic leads to a bounded sequence which clusters at a stationary point of \eqref{lsp}. Here we choose $k = 150$ for both algorithms and consider four different values of $\alpha = 1.9, 1.8, 1.7, 1.6$ for PDR splitting. For the SVP method, the parameter $\eta_t$ is set to $\eta_t = \frac{1}{p\sqrt{t}}$ as in \cite{JMD}. We take the low sampling ratio $p=0.08$ and recover respectively the matrix of rank =10 and 30 in the different sizes $n=3000, 5000, 8000$ and 10000.  Our computational results are displayed in \cref{tab-mcp1} and \cref{tab-mcp2}. All of these quantities are averaged over five runs.  \\

\begin{table}[htbp]\footnotesize
\centering
\begin{tabular}{|c|c|cccccc|} 
\hline
\multicolumn{1}{|c|}{rank}&\multicolumn{1}{|c|}{size}
&\multicolumn{6}{|c|}{ Average runtime(s) / iterations} \\
\hline
&&SVP&DR&$\alpha = 1.9$&$\alpha = 1.8$&$\alpha = 1.7$&$\alpha = 1.6$\\ \cline{2-8}
&3000&176/618 & 111/207 & 100/188  & 84/163 &67/130 & \textbf{43/84}  \\ \cline{2-8}

rank=10 &5000& 421/526 & 257/191&241/174& 210/153 & 180/126 & \textbf{126/83}  \\ \cline{2-8}

 &8000 & 973/474&920/180 & 788/165  & 619/148 & 413/121 & \textbf{314/82} \\ \cline{2-8}

&10000&1371/457& 4380/177 & 3962/163  & 3523/146 & 2854/120 & \textbf{1918/81} \\ \cline{1-8}

&3000& 555/658 & 242/265 & 264/233  & 217/194 & 166/148 & \textbf{99/90} \\ \cline{2-8}

rank=30  &5000&980/750& 545/227 & 475/204  & 394/174 & 311/137 & \textbf{176/86} \\ \cline{2-8}

&8000 & 1557/606 &1221/204 & 1138/186 & 910/162 &743/130 & \textbf{476/84} \\ \cline{2-8}

&10000&2113/562& 5735/197 & 5040/179  & 4311/157 & 3504/127 & \textbf{2279/83} \\ \cline{1-8}
\end{tabular}
\caption{Results of the runtime and number of iterations with $p=0.08$.}\label{tab-mcp1}
\end{table}

\begin{table}[htb]
\centering
\begin{tabular}{|c|c|cccccc|} 
\hline
\multicolumn{1}{|c|}{rank}&\multicolumn{1}{|c|}{size}
&\multicolumn{6}{|c|}{ Relative error ( $10^{-4} )$ } \\
\hline
&&~SVP~&~DR~&~$\alpha = 1.9$~&~$\alpha = 1.8$~&~$\alpha = 1.7$~&~$\alpha = 1.6$\\ \cline{2-8}
&~3000~& 1.41&1.36 & ~1.32  & ~1.32 & ~1.30 & ~\textbf{1.25} \\ \cline{2-8}

rank=10 &~5000&1.27&1.22 & ~1.17 & ~1.17 & ~1.14 & ~\textbf{1.08}  \\\cline{2-8}

 &~~8000~ &1.18&1.16 & ~1.15  & ~1.13 & ~1.11 & ~\textbf{1.00} \\\cline{2-8}

&~~10000~&1.15& 1.07 & 0.98  & ~0.96 & ~0.95 & ~\textbf{0.93}  \\\cline{1-8}

&~3000~&1.68& 1.83 & ~1.78  & ~1.75 & ~1.64 & ~\textbf{1.50} \\\cline{2-8}

rank=30  &~5000~&1.57& 1.50 & ~1.46  & ~1.46 & ~1.43 & ~\textbf{1.30} \\\cline{2-8}

&~~8000~ &1.39& 1.32 & ~1.34 & ~1.33 & ~1.23 & ~\textbf{1.22} \\\cline{2-8}

&~~10000~&1.32& 1.28 & ~1.28  & ~1.23 & ~1.24 & ~\textbf{1.16} \\ \cline{1-8}
\end{tabular}
\caption{Results of relative error with $p=0.08$}\label{tab-mcp2}
\end{table}

As can be seen from \cref{tab-mcp1} and \cref{tab-mcp2}, for four different values of $\alpha$, the PDR splitting method is always faster than the DR splitting and SVP method. Moreover the solution quality of PDR is better than DR splitting and SVP method. In particular, when $\alpha = 1.6$, we can see the PDR splitting method clearly outperforms the DR splitting method in terms of both the number of iterations and the solution quality.\\

{\bf Real data.}
We now evaluate our algorithms on the Movie-Lens \cite{data} data set, which contains one million ratings for 3900 movies by 6040 users. Table \ref{mcp1} shows the RMSE (root mean square error) obtained by each method with different rank $r$.  For SVP, we take step size $\eta$ as in \cite{JMD}.  For the classical DR  splitting and  PDR  algorithm, we adopt a heuristic method to choose $\gamma$ as before with $k = 100$.  As shown in Table \cref{mcp1}, the PDR algorithm performs always better than both SVP and DR methods. Note that the PDR algorithm with $\alpha=1.9$ is the best.
\begin{table}[htb]
\centering
\begin{tabular}{|c|c|c|c|c|c|c|c|c|}
\hline
~~rank~~& ~~SVP~~ & ~~DR~~ &~~ $\alpha=1.9$ ~& ~~$\alpha=1.8~$ & ~~$\alpha=1.7$ ~& ~~$\alpha=1.6$~ \\\cline{1-7}
5  &1.05 & 0.84 & \textbf{0.80}  & 0.81 & 0.81 & 0.82  \\\cline{1-7}
10 &0.99 & 0.79 & \textbf{0.75} & 0.75 & 0.76 & 0.76 \\\cline{1-7}
15 &0.96 & 0.76  & \textbf{0.72} & 0.72 & 0.73 & 0.74  \\\cline{1-7}
20 &0.93 & 0.72 & \textbf{0.67} & 0.68 & 0.69 & 0.70 \\\cline{1-7}
25 &0.91 & 0.68 & \textbf{0.65} & 0.66 & 0.66 & 0.67 \\\cline{1-7}
30 &0.88 & 0.65 &\textbf{0.62} & 0.63 & 0.63 & 0.64 \\\cline{1-7}
\end{tabular}
\caption{RMSE obtained by each method with varying rank $r$.}\label{mcp1}
\end{table}


\noindent{\bf Method 2:  Take $f(X) = \frac{1}{2}\|\mathcal{P}_{\Omega}(X) - \mathcal{P}_{\Omega}(M)\|^{2}$ and $g(X) = I_{C(r)}(X)$.} Applying the PDR splitting method \eqref{alg} to problem \eqref{ex3-opt}, we get the following algorithm
\begin{equation}\label{ex3-alg}
\begin{cases}
U^{t+1} = \arg\min_{U} \left\{ \frac{1}{2}\|\mathcal{P}_{\Omega}(U) - \mathcal{P}_{\Omega}(M)\|^{2} + \frac{1}{2\gamma}\|U-X^{t}\|^{2} \right\},\\
\\

V^{t+1} = \arg\min_{V} \left\{ I_{C(r)}(V) + \frac{1}{2\gamma}\|\alpha U^{t+1} - X^{t} - V\|^{2} \right\},\\
\\

X^{t+1} = X^{t} + (V^{t+1} - U^{t+1}).
\end{cases}
\end{equation}
We see from the above algorithm that both subproblems  in \eqref{ex3-alg} can be solved easily. For the first subproblem, whose solution can be obtained by using the optimal conditions. On the other hand, it is well known that the second problem has an explicit solution, i.e., the projector from $V$ to the set $C(r)$.  Thus, the solution of \eqref{ex3-alg} may be expressed as
\begin{equation} 
(PDR)~\begin{cases}
 U^{t+1}=
\begin{cases}
 \frac{1}{1+\gamma}\left( X_{i, j}^{t} + \gamma M_{i, j}\right),  ~~~ & (i, j) \in \Omega, \\
 X_{i, j}^{t},  ~~~~ & (i, j)\notin \Omega, \\
\end{cases}
\\
\\

V^{t+1} = P_{C(r)} (\alpha U^{t+1} - V^{t}),\\

\\
X^{t+1} = X^{t} + (V^{t+1} - U^{t+1}).
\end{cases}
\end{equation}
According to the analysis of the convergence in \cref{sec:convergence}, we know the sequence generated by \eqref{ex3-alg} converges to the critical point of $\frac{1}{2}\|\mathcal{P}_{\Omega}(X) - \mathcal{P}_{\Omega}(M)\|^{2} + I_{C(r)}(X) +  \frac{2-\alpha}{2\gamma} \| X \|^2$. In \cite{CCS}, the authors proposed the following singular value thresholding (SVT) method for solving low rank matrix completion problem:
\begin{equation} 
(SVT) ~ \begin{cases}
Y^{t+1}= \Sigma_{j=1}^{r_t}(\sigma_j^{t} - \tau)u_j^{t}v_j^{t}, \\

X^{t+1}_{ij}=~\begin{cases}
 0, ~~~~ & \textmd{if} ~ (i, j) \not\in \Omega,\\
X^{t}_{ij} + \delta(M_{ij} - Y^{t+1}_{ij}), ~~~~ & \textmd{if} ~ (i, j) \in \Omega, 
\end{cases}
\end{cases} 
\end{equation} 
where $U^{t}$, $\Sigma^{t}$, $V^{t}$ are the singular value decomposition of the matrix $Y^{t}$,  and $u_j^{t}, \sigma_j^{t}, v_{j}^{t}$ are corresponding singular vectors and singular value.  They proved that the sequence generated by SVT method convergence to a solution of the following optimization problem with a additional norm term:
\begin{equation}\label{ssvt}
\begin{aligned}
&\min~~~~\tau \| X \|_{*} + \frac{1}{2} \| X \|_{F}^{2},\\
&s.t.~~~~\mathcal{P}_{\Omega}(X) = \mathcal{P}_{\Omega}(M).
\end{aligned}
\end{equation} 
 Next we will  show the numerical experiments of our Algorithm \eqref{ex3-alg} in the cases $\alpha=1.9, 1.8$ and $1.7$, and compare with the results of SVT. \\
 
{\bf Set parameters.} In all of these  experiments, we use \eqref{mcp-stop} as a stopping criterion. For the SVT method, the parameters $\tau=5 n$ and $\delta=1.2 p^{-1}$ are chosen as in \cite{CCS}. As before, we also adopt a heuristic for both algorithms to update $\gamma$: \\

{\it We initialize $\gamma = k * \gamma_0$ and update $\gamma$ as $\max \{ \frac{\gamma}{2}, 0.9999\cdot \gamma_0 \}$ whenever $\gamma > \gamma_0$, and the sequence satisfies either $\| U^t - U^{t-1} \|_{F}/n > 1000/t$ or $\| U \|_{\infty} > 1e10$.}\\

Here we choose $k = 10^6$ for PDR splitting algorithm. Although $k = 10^6$ is selected large here, when the number of iteration $t$ increases, $\gamma$ will eventually be less than $\gamma_0$, which also guarantees the convergence of the algorithm according to \cref{sec:convergence}. We compute the relative error as in \eqref{mcp-error}. Specifically, for the low sampling ratio $p=0.08$ we recover respectively the matrix of rank =10 and 30 in the different sizes $n=3000, 5000, 8000$ and 10000.  Our computational results are displayed in the following. All of these quantities are averaged over five runs. 

\begin{table}[htbp]
\centering
\begin{tabular}{|c|c|cccc|} 
\hline
\multicolumn{1}{|c|}{rank}&\multicolumn{1}{|c|}{size}
&\multicolumn{4}{|c|}{ Average runtime(s) / iterations}\\
\hline
&&~~SVT~~&~~$\alpha = 1.9$~~&~~$\alpha = 1.8$~~&~~$\alpha = 1.7$~~\\ \cline{2-6}
&3000& 41/92 & 66/104  & 40/64 & \textbf{32/52} \\ \cline{2-6}

rank=10 &5000& 127/74 & 170/101 & 104/62 & \textbf{74/44}  \\\cline{2-6}

 &8000 &330/63   & 440/99 & 279/61 & \textbf{200/43} \\\cline{2-6}

&10000&497/60   & 677/98 & 404/60 & \textbf{297/43} \\\cline{1-6}

&3000& 102/167   & 105/122 & 68/79 & \textbf{98/107} \\\cline{2-6}

rank=30  &5000& 276/111   & 269/112 & 168/69 & \textbf{157/68} \\\cline{2-6}

&8000 & 501/86  & 651/105 &397/65 & \textbf{303/48} \\\cline{2-6}

&10000& 912/78   & 964/103 & 608/64 & \textbf{436/46} \\ \cline{1-6}
\end{tabular}
\caption{Results of the runtime and number of iterations with $p=0.08$}\label{tab-mcp3}
\end{table}

\begin{table}[htbp]
\centering
\begin{tabular}{|c|c|cccc|} 
\hline
\multicolumn{1}{|c|}{rank}&\multicolumn{1}{|c|}{size}
&\multicolumn{4}{|c|}{ Relative error ( $10^{-4} )$ } \\
\hline
&&~~SVT~~&~$\alpha = 1.9$~~&~~$\alpha = 1.8$~~&~~$\alpha = 1.7$\\ \cline{2-6}
&~3000~& 1.38   & ~1.20 & ~\textbf{1.11} & ~1.21 \\ \cline{2-6}

rank=10 &~5000&1.20  & ~1.09 & ~1.06 & ~\textbf{1.03} \\\cline{2-6}

 &~~8000~ &1.17   & ~1.03 & ~\textbf{0.97} & ~1.00 \\\cline{2-6}

&~~10000~& 1.05  & ~1.02 & ~1.03 & ~\textbf{0.88} \\\cline{1-6}

&~3000~& 1.85  & ~1.47 & ~\textbf{1.11} & ~1.70 \\\cline{2-6}

rank=30  &~5000~& 1.51  & ~1.25 & ~\textbf{1.10} & ~1.37 \\\cline{2-6}

&~~8000~ & 1.30 & ~1.20 & ~\textbf{1.08} & ~1.14  \\\cline{2-6}

&~~10000~& 1.24 & ~1.18 & ~1.05 & ~\textbf{0.98} \\ \cline{1-6}
\end{tabular}
\caption{Results of relative error with $p=0.08$}\label{tab-mcp4}
\end{table}

Table \cref{tab-mcp3} compares the runtime and the number of iterations required by various methods to reach the stopping criterion  \eqref{mcp-stop} for rank= 10 and 30 in different sizes of matrix when $p=0.08$.  Clearly, Our algorithms are substantially faster than the SVT method. In particular, when $\alpha=1.7$,  our algorithm performs extremely fast in these experiments. Moreover, from the Table \ref{tab-mcp4}, we know that the relative error of PDR algorithm is also lower than the others. Both in terms of time and relative error, we can see from \cref{tab-mcp1}, \cref{tab-mcp2}, \cref{tab-mcp3} and \cref{tab-mcp4} that {\bf Method 2} (with regularization terms) is better than {\bf Method 1} (without regularization terms) and better than the DR algorithm, which also shows the importance of regularization terms. \\

\section{Concluding remarks}
\label{sec:conclude}

In this paper, we apply a parameterized  Douglas-Rachford splitting method for solving several nonconvex optimization problems arising in machine learning.  By constructing  the new parameterized Douglas-Rachford merit function, we establish the global convergence of the parameterized  Douglas-Rachford splitting method when the parameters $\gamma$ and $\alpha$ satisfy \eqref{gammcond} and the sequence generated has a cluster point. We also give sufficient conditions to  guarantee the boundedness of the sequence generated by the proposed method and thus the existence of cluster points.  Finally, we apply our parameterized DR splitting method to three important classes of nonconvex optimization problems: sparsity constrained least squares problem, feasibility problem and low rank matrix completion. The numericial experiments indicate that the parameterized DR splitting method is significantly  better than  the Douglas-Rachford, Peaceman–Rachford splitting methods for the sparsity constrained least squares problem and feasibility problem.  The numerical experiments  also show that the parameterized DR splitting method outperforms some classical methods for the low rank matrix completion in terms of runtime, number of iterations and relative error.

%

\bibliographystyle{siamplain}
\bibliography{reference}


\end{document}